\newcommand{\R}{\mathbb{R}}
\newcommand{\A}{\mathbb{A}}
\newcommand{\N}{\mathbb{N}}
\newcommand{\M}{\mathcal{M}}
\newcommand{\E}{\mathbb{E}}
\newcommand{\bL}{\mathbb{L}}
\newcommand{\bH}{\mathbb{H}}
\newcommand{\mm}{\mathfrak{m}}
\renewcommand{\P}{\mathbb{P}}
\renewcommand{\L}{\mathcal{L}}
\renewcommand{\S}{\mathcal{S}}
\newcommand{\F}{\mathcal{F}}
\newcommand{\X}{\mathcal{X}}
\newcommand{\ce}{\coloneqq}
\newcommand{\ind}[1]{\mathds{1}_{\{#1\}}}
\newcommand{\cobar}{\overline{\text{co}}}
\newcommand{\intt}{\int_0^t}
\newcommand{\intT}{\int_0^T}
\newcommand{\norm}[1]{\left\lVert#1\right\rVert}
\newcommand{\abs}[1]{\left|#1\right|}
\newcommand{\pa}[1]{\left(#1\right)}
\newcommand{\cbra}[1]{\left\{#1\right\}}
\newcommand{\sqbra}[1]{\left[#1\right]}
\newcommand{\nto}{\stackrel{n\to \infty}{\longrightarrow}}
\newcommand{\limn}{\lim_{n\to\infty}}
\newcommand{\calP}{\mathcal{P}}
\newcommand{\calM}{\mathcal{M}}
\newcommand{\calW}{\mathcal{W}}
\newcommand{\calL}{\mathcal{L}}
\newcommand{\calK}{\mathcal{K}}
\DeclareOldFontCommand{\bf}{\normalfont\bfseries}{\mathbf}
\DeclareOldFontCommand{\cal}{\normalfont\bfseries}{\mathcal}
\DeclareMathOperator*{\argmin}{arg\,min}
\DeclareMathOperator*{\as}{\textit{-a.s.}}
\newtheorem{theorem}{Theorem}[section]
\newtheorem{definition}[theorem]{Definition}
\newtheorem{proposition}[theorem]{Proposition}
\newtheorem{corollary}[theorem]{Corollary}
\newtheorem{lemma}[theorem]{Lemma}
\newtheorem{assumption}[theorem]{Assumption}
\newtheorem{remark}[theorem]{Remark}
\begin{document}
\title[MFGs with common noise via Malliavin Calculus]{Mean field games with common noise via Malliavin calculus}
\thanks{Both authors acknowledge financial support by NSF CAREER award DMS-2143861 and the AMS Claytor-Gilmer fellowship.}
\author{Ludovic Tangpi}
\author{Shichun Wang}
\address{Princeton University: Department of Operations Research and Financial Engineering and Program in Computational and Applied Mathematics}
\email{ludovic.tangpi@princeton.edu}
\email{shichun.wang@princeton.edu}

\keywords{Mean Field Games ; Common Noise ; Malliavin Calculus} % Separate items with ;
\maketitle

\begin{abstract}
We present a simpler proof of the existence of equilibria for a class of mean field games with common noise, where players interact through the conditional law given the current value of the common noise rather than its entire path. By extending a compactness criterion for Malliavin-differentiable random variables to processes, we establish existence of strong equilibria, where the conditional law and optimal control are adapted to the common noise filtration and defined on the original probability space. Notably, our approach only requires measurability of the drift and cost functionals with respect to the state variable.
\end{abstract}

\section{Introduction}

Mean field games (MFGs), initiated in \cite{LasryLions1}, become challenging to analyze when common noise is involved. The primary difficulty arises from the lack of compactness in the space of measure-valued processes.
A workaround proposed by \cite{CarmonaCommonNoise} addresses this issue by discretizing the common noise in both space and time, thereby compactifying the domain. However, the solutions obtained through this method are so-called \emph{weak equilibria}: they are not necessarily adapted to the Brownian filtration. These equilibria are also considered weak in the probabilistic sense, as the probability setup becomes part of the solution itself. Some extensions have been explored in \cite{Barrasso-Touzi22,TangpiWang23}. 
On the other hand, strong equilibria  (adapted to the common noise filtration) can be obtained through the ``master equation'', a system of stochastic PDEs, or forward-backward SDEs \cite{CardaliaguetDelarueLasryLions19, ahuja2016, Mou-Zhang22,DisplMonotone23}. However, these methods require additional monotonicity and regularity conditions, which are often so stringent that they lead to a unique equilibrium.

In this paper, we propose a markedly simpler strategy to construct strong mean field equilibria (MFE) for a specific class of common noise MFGs, where players interact through the conditional law given the current value of the common noise, rather than the whole path. Our approach leverages a compactness criterion for Sobolev spaces of Wiener functionals developed by \cite{Malliavin1992compact}. This compactness property is akin to Ascoli's condition and is typically verified by means of Malliavin calculus. Notably, this criterion allows us to bypass the discretization procedure and directly establish the existence of equilibria that are adapted to the Brownian filtration (i.e. strong equilibria). Moreover, because our approach does not impose any monotonicity condition on the inputs, the resulting equilibrium is not necessarily unique.

Another advantage of our method is the relaxation of regularity requirements: the drift function of the state variable and the cost functionals only need to be measurable in space. We first prove the existence and uniqueness of strong MFE under the weak formulation proposed in \cite{CarmonaLacker15}, and then bring the equilibrium back to the original probability space. We achieve this by a mimicking argument originally introduced in \cite{brunick2013mimicking}, along with a key result from \cite{Menoukeu-Meyer-Nilssen13}, which establishes that the unique strong solution to an SDE with bounded and measurable drift is Malliavin differentiable. Additionally, we incorporate a \emph{common state} in the setup, which generalizes the Brownian common noise and naturally arises in applications such as optimal execution models in finance \cite{TangpiWang23, CardaliaguetMFG18}.

\section{Setting and Main Result}

 Consider any probability space $(\Omega, \F, \P)$ that supports independent Brownian motions $W$ and $W^0$ taking values in $\R^{d_I}$ and $\R^{d_C}$ respectively, along with independent, $\R^{d_I} \times \R^{d_C}$-valued random variables $(\xi, \xi^c)$.
 Consider the $\P$-completed filtration $\mathbb{F} = (\F_t)_{t\in[0,T]}$ generated by $(\xi, \xi^c, W,W^0)$ as well as $\mathbb{F}^0 = (\F_t^0)_{t\in [0,T]}$  generated by $W^0$ only. All players observe a \emph{common state} $X^{c}$, which solves the following SDE:
 \begin{equation}\label{eq:common.state.SDE}
 dX^{c}_t = b^c(t, X^{c}_t)dt + \sigma^cdW^0_t,\quad X^c_0 = \xi^c.
 \end{equation}
 Consider the Polish space $\calP_p(\R^{d_I})$ of probability measures with finite $p$-th moment, equipped with the Wasserstein metric 
 \begin{equation*}
     \calW_p(\mu,\nu) \ce \bigg(\inf_{\pi \in \Pi(\mu, \nu)}\Big\{\int_{\R^{d_I}\times \R^{d_I}}|x - y|^p\pi(dx, dy)\Big\}\bigg)^{\frac{1}{p}},
 \end{equation*}
where $\Pi(\mu, \nu) \subset \calP_p(\R^{d_I} \times \R^{d_I})$ denotes the set of couplings with marginals $\mu$ and $\nu$. We also recall the duality formula for $p = 1$:
\begin{equation}\label{eq:KantorovichDuality}
      \calW_1(\mu, \nu)= \sup_{\phi \in \text{Lip}_1(\R^{d_I})} \int_{\R^{d_I}} \phi d(\mu - \nu),
 \end{equation}
 where $\text{Lip}_1(\R^{d})$ denotes the set of $1$-Lipschitz functions $\phi:\R^{d} \to \R$.
 % such that $\phi(0) = 0$. 
 Let $\M$ denote the Polish space of $\mathbb{F}^0$-progressively measurable, $\calP_p(\R^{d_1})$-valued stochastic processes $m$ equipped with metric $d_\calM$ defined by
 \begin{equation}\label{eq:calMmetric}
     d^p_\calM(m, m') \ce \E\bigg[\bigg(\intT \calW^2_p(m_t, m_t')dt\bigg)^{p/2}\bigg].
 \end{equation}
 For $\mu \in \calP_p(\R^{d_I})$, set $|\mu|^p \ce \int_{\R^{d_I}}|x|^p\mu(dx) < \infty$. Let $A \subset \R^{d_A}$ be the action space, and the set of \emph{admissible} strategies $\A$ be the set of $\mathbb{F}$-progressively measurable, square integrable $A$-valued processes on $\Omega \times [0, T]$ such that the following controlled state process has a unique square-integrable strong solution with values in $\R^{d_I}$:
 \begin{equation}\label{eq:Strong_State}
     X^{\alpha}_t = \xi + \intt b(s, X^{\alpha}_s, m_s, \alpha_s)ds +  \sigma W_t + \sigma^0 W^0_t, \quad t \in [0, T],
 \end{equation}
 with given $m \in \M$. We consider the following objective for running and terminal cost functionals $f$ and $g$:
 \begin{equation}\label{eq:Strong_Objective}
     J^{m}(\alpha) = \E\sqbra{\intT f(t, X^{\alpha}_t, m_t, \alpha_t)dt + g(X_T, m_T)}.
 \end{equation}
 The coefficients are Borel measurable, deterministic functions
 \begin{gather*}
         b:[0, T] \times \R^{d_I} \times \calP_p(\R^{d_I}) \times A \to \R^{d_I}, \quad b^c: [0, T] \times \R^{d_C} \to \R^{d_C}\\
         (\sigma, \sigma^0, \sigma^c) \in \R^{d_I \times d_I} \times \R^{d_I \times d_C} \times \R^{d_C \times d_C}\\
         f: [0, T] \times \R^{d_I} \times \calP_p(\R^{d_I}) \times A \to \R, \quad g: \R^{d_I} \times \calP_p(\R^{d_I}) \to \R.
 \end{gather*}
We work under the setup where players interact through the conditional law given the \emph{current} value of the common state. 
 \begin{definition}\label{Def:Strong_MFGsolution}
 A mean field equilibrium is a pair $(\hat m,\hat\alpha) \in \calM \times \A$; $\hat\alpha \in \A$ minimizes the objective $J^{\hat{m}}$ defined in \eqref{eq:Strong_Objective} over $\A$. Moreover, for every $t \in [0, T]$, $\hat m_t$ is the conditional law under $\P$ of $X^{\hat\alpha}_t$ given $X^c_t$, which we denote as
 \begin{equation*}
     \hat m_t = \calL^{\P}\big(X^{\hat\alpha}_t | X^c_t\big), \quad \P\as
 \end{equation*} 
 \end{definition}

\begin{remark}
 \label{rem:def.equil}
    The attentive reader has noticed that our definition of equilibrium is a bit different from the usual one where one imposes that the population law  matches the conditional law of the optimal state given the entire past of the common state: $\hat m_t = \calL^{\P}\big(X^{\hat\alpha}_t | \F^{X^c}_t\big), \P\as$
    This definition is more natural and inspired from propagation of chaos in the presence of common noise.
    Ours is a simplification that will make the proof easier.
    
    However, our definition is also consistent with some $N$-particle models.
    For instance, if we consider the simple particle system $
    dX^{i,N}_t = \delta\big(\frac1N\sum_{j=1}^NX^{j,N}_t - X^{i,N}_t\big)dt + dW^i_t + dW^0_t,\,\, X^{i,N}_0 = X^i_0$, then we know by propagation of chaos that the sample average $\bar X_t=\frac1N\sum_{j=1}^NX^{j,N}_t$ converges to $\E[X_t|\mathcal{F}^{W^0}_t]$ where $X$ solves the associated conditional McKean-Vlasov equation, whereas computing $\bar X_t$ directly and using the law of large numbers shows that it also converges to $\E[X_0^1] + W_t^0$, showing that $\E[X_t|\mathcal{F}_{t}^{W^0}] = \E[X_t|W_t^0]$.
    We will further elaborate on our definition in Remark \ref{remark:generalMFG} below.
\end{remark}

 Define the reduced Hamiltonian $H: [0, T] \times \R^{d_I} \times \calP_p(\R^{d_I}) \times A \times \R^{d_I} \to \R$ and minimized Hamiltonian $h$ respectively by
 \begin{gather*}
     H(t, x, \mu, a, z) \ce f(t, x, \mu, a) + z \cdot \sigma^{-1}b(t, x, \mu, a)\\
     h(t, x, \mu, z) \ce \inf_{a \in A} H(t, x, \mu, a, z).
 \end{gather*}
\begin{assumption}\label{Assumption:E}~
There exists $p \geq 2$ such that $\xi \in L^{2p}(\R^{d_I})$, $\xi^c \in L^{2p}(\R^{d_C})$ and:
 \begin{enumerate}[label=(\ref{Assumption:E}.\arabic*)]
         \item \label{Assumption:E1} For all $(t, x)\in [0, T] \times \R^{d_I}$, the map $b(t, x,\cdot, \cdot)$ is continuous. The functions $b, b^c$ are uniformly bounded. The matrices $\sigma$ and $\sigma^c$ are invertible. 
     \item \label{Assumption:E2} For all $(t, x)\in [0, T] \times \R^{d_I}$, the maps $f(t, x, \cdot, \cdot)$ and $g(x, \cdot)$ are continuous, and there exists $C > 0$ such that for all $(t, x, \mu, a) \in [0, T] \times \R^{d_I} \times \calP_p(\R^{d_I}) \times A:$ 
     \begin{equation*}
         |g(x, \mu)| + |f(t, x, \mu, a)| \leq C(1+ |x|^p + |\mu|^p  + |a|^p).
     \end{equation*}
     \item \label{Assumption:E3} The action space $A \subset \R^{d_A}$ is compact and convex. Moreover, for all $(t, x, \mu, z) \in [0, T] \times \R^{d_I} \times \calP_p(\R^{d_I}) \times \R^{d_I}$, there exists a unique minimizer of $H(t, x, \mu, \cdot, z)$.
 \end{enumerate}
\end{assumption}

\noindent  We stress that there is no regularity requirement in $x$ imposed on $f, g$, and more importantly, $b$. Under Assumption \ref{Assumption:E1}, functions $h$ and $H$ are both Lipschitz in $z$. Under Assumption \ref{Assumption:E3}, we define
 $\hat{a}(t, x, \mu, z) \ce \argmin_{a \in A} H(t, x, \mu, a, z)$. 
 \begin{assumption}\label{Assumption:M}~
The following set is convex for each $(t, x, \mu) \in [0, T] \times \R^{d_I} \times \calP_p(\R^{d_I})$:
     $$\Big\{\big(b(t, x, \mu, a), z\big): z \in \R, a \in A, z \geq f(t, x, \mu, a)\Big\}.$$
 \end{assumption}
 
This convexity assumption is common for measurable selection arguments \cite{haussmann2006existence, lacker2015mean}. A typical example where it is satisfied is where the drift $b$ is affine in $a$, and $f$ is convex in $a$. We now state the main result on the existence of equilibrium.
 \begin{theorem}\label{Theorem:Main}
 \label{thm:main}
     Under Assumptions \ref{Assumption:E} and \ref{Assumption:M}, the mean field game admits a mean field equilibrium $(\hat{m}, \hat{\alpha}) \in \calM \times \A$. Moreover, its optimal control is Markovian. Specifically, there exists a measurable function $\hat{\alpha}_M: [0, T] \times \R^{d_I} \times \R^{d_C} \to A$ such that $$\hat{\alpha}_t = \hat{\alpha}_M(t, X^{\hat{\alpha}}_t, X^{c}_t), \quad \P\otimes dt \as$$
     where $(X^{\hat{\alpha}}, X^{c})$ uniquely solves the SDE
     $$\begin{cases}dX^{\hat{\alpha}}_t = b(t, X^{\hat{\alpha}}_t, \hat{m}_t, \hat{\alpha}_M(t, X^{\hat{\alpha}}_t, X^{c}_t))dt + \sigma dW_t + \sigma^0 dW^0_t, \quad X^{\hat{\alpha}}_0 = \xi.\\
     dX^c_t = b^c(t, X^c_t)dt + \sigma^cdW^0_t, \quad X^c_0 = \xi^c.
     \end{cases}$$
 \end{theorem}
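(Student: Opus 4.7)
The plan is to execute the two-stage strategy advertised in the introduction: first build a fixed point of the best-response map in a weak (Girsanov) formulation, then transport it back to the original probability space $(\Omega, \F, \P)$ via a mimicking argument. In the weak formulation I would work on an auxiliary space carrying the driftless state $X_t = \xi + \sigma W_t + \sigma^0 W^0_t$ together with $X^c$, and introduce controls through Girsanov's change of measure $d\Q^\alpha/d\P = \calE\big(\int_0^\cdot \sigma^{-1} b(s, X_s, m_s, \alpha_s)\, dW_s\big)_T$, which is a true martingale thanks to the boundedness of $b$ in Assumption \ref{Assumption:E1}. For fixed $m \in \calM$, Assumption \ref{Assumption:E3} together with the convexity condition in Assumption \ref{Assumption:M} and a measurable selection theorem produce a Borel minimizer $\hat a$ of the Hamiltonian, hence an admissible optimizer $\alpha^*(m)$ of $\E^{\Q^\alpha}[\int_0^T f(t, X_t, m_t, \alpha_t)\, dt + g(X_T, m_T)]$. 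This defines a best-response map $\Phi: m \mapsto \big(t \mapsto \calL^{\Q^{\alpha^*(m)}}(X_t \mid X^c_t)\big)$ on $\calM$, whose fixed points are weak mean field equilibria.

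The hardest step is identifying a convex subset of $(\calM, d_\calM)$ that is both left invariant by $\Phi$ and on which $\Phi$ is continuous and relatively compact. Boundedness of $b$ and standard Burkholder-Davis-Gundy estimates supply a closed ball $B_R \subset \calM$ with $\Phi(B_R) \subset B_R$, and continuity of $\Phi$ on $B_R$ would follow from the continuity hypotheses in \ref{Assumption:E1}-\ref{Assumption:E2} combined with uniqueness of the Hamiltonian minimizer in \ref{Assumption:E3}. The delicate point is relative compactness of $\Phi(B_R)$: here I would invoke the Malliavin compactness criterion of \cite{Malliavin1992compact}, lifted from Wiener functionals to processes, applied to the scalar $\mathbb{F}^0$-adapted processes $t \mapsto \E^{\Q^{\alpha^*(m)}}[\phi(X_t) \mid X^c_t]$ indexed by $\phi \in \text{Lip}_1(\R^{d_I})$; via the Kantorovich duality \eqref{eq:KantorovichDuality} for $\calW_1$ this would upgrade to compactness in $(\calM, d_\calM)$. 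The requisite uniform Malliavin estimates follow from the Clark-Ocone formula combined with the explicit form of the Girsanov density: the Malliavin derivatives of both $\phi(X_t)$ and the Radon-Nikodym density can be bounded purely in terms of $\|b\|_\infty$ and the unit Lipschitz constant of $\phi$, uniformly in $m \in B_R$. Schauder's fixed point theorem then yields a weak equilibrium $(\hat m, \hat \alpha)$.

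To transfer the equilibrium to $(\Omega, \F, \P)$ and exhibit the Markovian feedback $\hat \alpha_M$, I would apply the mimicking theorem of \cite{brunick2013mimicking} to the joint process $(X^{\hat\alpha}, X^c)$ under $\Q^{\hat\alpha}$. Since the equilibrium condition forces $\hat m_t = \hat\mu(t, X^c_t)$ for a measurable $\hat \mu$, the mimicking output can be recast, using the convexity in Assumption \ref{Assumption:M}, as a Borel feedback $\hat \alpha_M: [0, T] \times \R^{d_I} \times \R^{d_C} \to A$ for which the Markovian SDE
\begin{equation*}
dY_t = b\big(t, Y_t, \hat m_t, \hat \alpha_M(t, Y_t, X^c_t)\big)\, dt + \sigma\, dW_t + \sigma^0\, dW^0_t, \quad Y_0 = \xi,
\end{equation*}
preserves the one-dimensional time marginals of $(X^{\hat\alpha}, X^c)$, and in particular $\calL(Y_t \mid X^c_t) = \hat m_t$ for every $t$. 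The composed drift being bounded and Borel measurable, the result of \cite{Menoukeu-Meyer-Nilssen13} delivers a unique Malliavin-differentiable strong solution on $(\Omega, \F, \P)$, which is in particular $\mathbb{F}$-adapted. Setting $\hat \alpha_t := \hat \alpha_M(t, Y_t, X^c_t)$ produces an admissible control on the original space, and the preserved conditional law closes the fixed point condition, completing the proof.
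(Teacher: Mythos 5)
Your overall architecture (weak formulation via Girsanov, best-response map, Schauder, then mimicking back to the strong setting) matches the paper's, but the pivotal compactness step is carried out differently, and your version has a genuine gap. You propose to verify the Da Prato--Malliavin--Nualart criterion directly on the scalar processes $t\mapsto \E^{\Q^{\alpha^*(m)}}[\phi(X_t)\mid X^c_t]$ by Clark--Ocone, claiming the Malliavin derivative of the Girsanov density is controlled by $\|b\|_\infty$. This fails under the standing assumptions: $b$ is only \emph{measurable} in $x$, so the random variable $b(s,X_s,m_s,\alpha_s)$ (and a fortiori the stochastic exponential $M_T=\calE(\int_0^\cdot\sigma^{-1}b\,dW)_T$) need not lie in $\mathbb{D}^{1,2}$ at all --- the Malliavin chain rule requires at least Lipschitz continuity in $x$, and even formally $D_s$ of $b(t,X_t,\cdot)$ involves a spatial derivative of $b$ that cannot be bounded by $\|b\|_\infty$. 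The paper avoids exactly this obstruction by reordering your steps: it first applies the mimicking theorem of \cite{brunick2013mimicking} \emph{inside} the fixed-point construction, replacing the controlled law by the law of a strong solution of a Markovian SDE with bounded measurable drift, and then invokes the nontrivial result of \cite{Menoukeu-Meyer-Nilssen13} that such solutions are Malliavin differentiable with uniform bounds $\E[\|D_sX_t-D_{s'}X_t\|^2]\le C|s-s'|$ depending only on $\|b\|_\infty$ --- a result proved by smooth approximation of the drift, not by the chain rule. Compactness is then obtained for the state processes themselves (Corollary \ref{Cor:MalliavinCompact}), and the range of $\Phi$ lands in the compact convex set $\calK=\cobar(\M_{C,1})$; your closed ball $B_R$ is not compact and your route to relative compactness of $\Phi(B_R)$ does not go through.

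Two secondary points. First, even granting compactness of each scalar family $\E^{\Q}[\phi(X_t)\mid X^c_t]$ for fixed $\phi$, passing to relative compactness of the measure-valued processes in $(\calM,d_\calM)$ is not automatic: the metric \eqref{eq:calMmetric} uses $\calW_p$ with $p\ge 2$, so $\calW_1$-duality alone does not suffice; the paper needs the uniform-integrability upgrade of Lemma \ref{Lemma:WpisW1}. Second, your Step 1 jumps from a measurable selection of the Hamiltonian minimizer to optimality of the induced control; the paper substantiates this via the BSDE \eqref{eq:optim.BSDE} and the comparison principle of \cite{el1997backward}, which is also what later yields the $\P\otimes dt$-a.e.\ identification $\hat\alpha=\widetilde\alpha$ needed for the Markovian representation. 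These are repairable, but the Malliavin-differentiability of the Girsanov density is not, and the mimicking-first order of operations is essential rather than cosmetic.
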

 We emphasize the fact that the equilibrium is strong in the probabilistic sense, where the probability setup is not part of the solution. The equilibrium is also strong in the control-theoretic sense, where the filtrations are the $\P$-complete natural filtrations of the Brownian motions, and $\hat{m}$ (resp. $\hat\alpha$) is $\mathbb{F}^0$ (resp. $\mathbb F$) adapted.

 \section{Existence of MFE: Proof of Theorem \ref{thm:main}}\label{sec:proof}
We first reformulate the MFG in the weak sense, proving the existence and uniqueness of the equilibrium in that framework, before returning to the original setting described above.

 \subsection{Weak Formulation of MFG}\label{subsection:weakformulation}
 Let us begin by introducing some notation of spaces and norms that will be used throughout the proof. Consider the probability space $(\Omega, \F, \widetilde{\P})$ with probability measure $\widetilde{\P}$ possibly different from $\mathbb{P}$. For $k\in \mathbb{N}$ and a filtration $\mathbb{G}$ we denote by $\mathbb{S}^2(\R^k,\mathbb{G}; \widetilde{\P})$ the set of continuous, $\mathbb{G}$-adapted and $\R^k$-valued processes $Y$ such that $\|Y\|^2_{\mathbb{S}^2(\R^k,\mathbb{G}; \widetilde{\P})}:=\E^{\widetilde{\P}}[\sup_{t\in [0,T]}|Y_t|^2]<\infty$.
 We denote by $\mathbb{H}^2(\R^k,\mathbb{G}; \widetilde{\P})$ the space of $\mathbb{G}$-progressively measurable, $\mathbb{R}^k$-valued processes $Z$ such that $\|Z\|^2_{\mathbb{H}^2(\R^k,\mathbb{G}; \widetilde{\P})}\ce\E^{\widetilde{\P}}[\int_0^T|Z_t|^2dt]<\infty$. We drop the probability measure from the notation when $\widetilde \P = \P$.

Consider the driftless and uncontrolled state process:
 \begin{equation}\label{eq:driftlessState}
     X_t = \xi +  \sigma W_t + \sigma^0 W^0_t.
 \end{equation}
 Let $\A_w$ denote the set of admissible strategies in the weak formulation, which are simply $A$-valued, $\mathbb{F}$-progressive processes. For a fixed $\alpha \in \A_w$ and $m \in \M$, define the equivalent probability measure $\P^{\alpha, m}$ as well as the process $W^{\alpha, m}$ by
 \begin{equation}\label{eq:Girsanov}
     \frac{d\P^{\alpha, m}}{d\P} \ce \mathcal{E}\pa{\int_0^\cdot \sigma^{-1}b(s, X_s, m_s, \alpha_s)\,dW_s}_T, \quad W^{\alpha,m}_t \ce W_t - \intt  \sigma^{-1}b(s, X_s, m_s, \alpha_s)ds
 \end{equation}
 where $\mathcal{E}(\cdot)$ denotes the stochastic exponential. Assumption \ref{Assumption:E1} ensures that $\P^{\alpha, m}$ is well-defined and Girsanov's theorem applies, so $(W^{\alpha,m}, W^0)$ is a Brownian motion under $\P^{\alpha, m}$. Moreover, under $\P^{\alpha, m}$, $X$ follows the dynamics
 \begin{equation*}
     dX_t = b(t, X_t, m_t,  \alpha_t)dt + \sigma dW^{\alpha, m}_t + \sigma^0dW^0_t, \quad X_0 = \xi.
  \end{equation*} 
 The objective under the weak formulation reads
 \begin{equation}\label{eq:Objective}
     J^{m}_{w}(\alpha) \ce \E^{\P^{\alpha, m}}\sqbra{\intT f(t, X_t, m_t, \alpha_t)dt + g(X_T, m_T)}.
 \end{equation}

 \begin{definition}\label{Def:MFGsolution}
 A mean field equilibrium under the weak formulation is a pair $(\hat m,\hat\alpha) \in \calM \times \A_w$ where $\hat\alpha$ minimizes the objective $J^{\hat{m}}_{w}$ defined in \eqref{eq:Objective} over $\A_w$. Moreover, for almost every $t$: $$\hat m_t = \calL^{\P^{\hat{\alpha}, \hat{m}}}\big(X_t | X^c_t), \quad \P\as$$
 \end{definition}

Note that the probability space is part of the solution. However, the equilibrium here is still strong since the filtrations are Brownian, and $\hat{m}$ is $\mathbb{F}^0$-adapted.

 \begin{theorem}\label{Theorem:MainWeak}
     Under Assumption \ref{Assumption:E} and Assumption \ref{Assumption:M}, the mean field game admits an equilibrium under the weak formulation.
 \end{theorem}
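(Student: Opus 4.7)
The plan is to carry out a Schauder fixed point argument on $\calM$, taking advantage of the fact that under the weak formulation the state process \eqref{eq:driftlessState} is driftless and independent of the control, with all control-dependence passing through the Girsanov density \eqref{eq:Girsanov}. This decouples the compactness analysis from the absence of regularity of $b$ in $x$.

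For each fixed $m \in \calM$, the first step is to solve the individual agent's problem by BSDE. Under Assumption \ref{Assumption:E1}, the minimized Hamiltonian $h$ is Lipschitz in $z$ uniformly in the other variables, so classical BSDE theory delivers a unique triple $(Y^m, Z^m, Z^{0,m}) \in \bS^2 \times \bH^2 \times \bH^2$ solving
\begin{equation*}
    Y^m_t = g(X_T, m_T) + \int_t^T h(s, X_s, m_s, Z^m_s)\,ds - \int_t^T Z^m_s \cdot dW_s - \int_t^T Z^{0,m}_s \cdot dW^0_s.
\end{equation*}
A standard verification argument using the unique-minimizer condition of Assumption \ref{Assumption:E3} identifies the optimal feedback $\hat\alpha^m_t := \hat a(t, X_t, m_t, Z^m_t) \in \A_w$, and a weak MFE is precisely a fixed point of the map $\Phi: \calM \to \calM$ defined by $\Phi(m)_t := \calL^{\P^{\hat\alpha^m, m}}(X_t \mid X^c_t)$.

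The compactness of the range of $\Phi$ is where Malliavin calculus enters. Since $X^c_t$ is a measurable functional of $W^0$ with invertible diffusion $\sigma^c$, one can represent $\Phi(m)_t = \phi_m(t, X^c_t)$ for a measurable map $\phi_m$. Applying Bayes' rule to the duality \eqref{eq:KantorovichDuality}, the $\calW_1$-modulus of $\phi_m$ in its spatial argument is controlled by the Malliavin derivative $D^{W^0}\Phi(m)$, which is uniformly bounded in $L^p$ because $X$ is driftless with explicit Malliavin derivative $\sigma \mathrm{Id}$, and all Girsanov integrands are bounded by Assumption \ref{Assumption:E1} (so the density belongs to every $L^q$). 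Extending the Da Prato--Malliavin--Nualart compactness criterion of \cite{Malliavin1992compact} from random variables to $\mathbb{F}^0$-progressive $\calP_p(\R^{d_I})$-valued processes in $\calM$ yields a compact convex set $\calK \subset \calM$ with $\Phi(\calM) \subset \calK$.

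Continuity of $\Phi$ on $\calK$ would follow from BSDE stability in $d_\calM$ (combining continuity of $f, g$ in the measure argument from Assumption \ref{Assumption:E2} with continuity of the unique minimizer $\hat a$ in $z$ induced by Assumption \ref{Assumption:E3}), together with $L^p$-stability of Girsanov densities and conditional expectations; Assumption \ref{Assumption:M} provides the convexity structure should a measurable-selection step be needed along the way. Schauder's theorem then produces a fixed point $\hat m \in \calK$, and the pair $(\hat m, \hat\alpha^{\hat m})$ is the desired weak MFE. The main obstacle is the process-level extension of the Malliavin compactness criterion, and more concretely the uniform-in-$m$ Malliavin bounds on the Bayes' quotient defining $\Phi(m)_t$; the driftless reformulation is precisely the device that makes these bounds tractable despite $b$ being only measurable in $x$.
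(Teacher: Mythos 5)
Your skeleton (BSDE verification for the control problem, the fixed-point map $\Phi(m)_t = \calL^{\P^{\hat\alpha^m,m}}(X_t\mid X^c_t)$, compactness via the Da Prato--Malliavin--Nualart criterion, continuity via BSDE stability, then a Schauder-type theorem) matches the paper's. The gap is in the compactness step, and it is not a technicality: you propose to bound the Malliavin derivative of the Bayes quotient $\E[M_T\phi(X_t)\mid X^c_t]/\E[M_T\mid X^c_t]$, asserting that $D^{W^0}\Phi(m)$ is uniformly bounded ``because $X$ is driftless and the Girsanov integrands are bounded.'' Boundedness of $\sigma^{-1}b$ gives $M_T\in L^q$ for every $q$, but it does not make $M_T$ Malliavin differentiable: $M_T$ is the stochastic exponential of $\int_0^\cdot\sigma^{-1}b(s,X_s,m_s,\hat\alpha^m_s)\,dW_s$, and differentiating it would require differentiating $x\mapsto b(s,x,\cdot,\cdot)$ and the feedback $\hat\alpha^m_s=\hat a(s,X_s,m_s,Z^m_s)$ (hence $Z^m$, which solves a BSDE whose driver is merely measurable in $X_s$). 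Under Assumption \ref{Assumption:E} these maps are only measurable in the state, so no chain rule is available and your uniform $L^p$ bound on the Malliavin derivative of the quotient has no proof. This is precisely the difficulty the paper's hypotheses force one to confront.

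The paper's route through this step is different and essential. Using Assumption \ref{Assumption:M} and a measurable selection (this is where that assumption is actually needed --- not, as you suggest, as an optional aid in the continuity step), it first replaces the optimal open-loop control by a Markovian one via the mimicking theorem of Brunick--Shreve, so that $\Phi(m)_t$ coincides with $\calL^{\P}(X^{\alpha_M}_t\mid X^c_t)$ for the strong solution $X^{\alpha_M}$ of an SDE with bounded, merely measurable Markovian drift on the original space. It then invokes the result of Menoukeu-Pamen et al.\ that such strong solutions are Malliavin differentiable with bounds on $\E[\|D_sX_t-D_{s'}X_t\|^2]$ depending only on $\|b\|_\infty$ --- a nontrivial theorem obtained by smooth approximation of the drift, not by a chain rule. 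The compactness criterion is then applied to the $\R^{d_I}$-valued state processes (Corollary \ref{Cor:MalliavinCompact}), and compactness is transferred to the measure-valued processes through the continuity of the map $(\widetilde X,X^c)\mapsto\calL^{\P}(\widetilde X_\cdot\mid X^c_\cdot)$ (Lemma \ref{lemma:calK}), rather than by Malliavin-differentiating conditional laws directly. Without the mimicking/Markovian-projection step and this external input, your compactness argument does not close.
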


 \subsection{Malliavin Calculus}
 We refer to \cite[Chapter 1]{nualart2006malliavin} for definitions related to Malliavin calculus. Specifically, for $p \geq 1$, the Sobolev space $\mathbb{D}^{1,p}(\R^d)$ of Malliavin differentiable, $\R^d$-valued random variables $\xi$ is equipped with the norm 
\begin{equation*}
    \|\xi\|^p_{1,p} := \E\sqbra{|\xi|^p} + \E\sqbra{\abs{\int_0^T|D_t\xi|^2dt}^{p/2}},
\end{equation*}
where $D_t\xi$ denotes the Malliavin derivative of $\xi$ with respect to $(W, W^0)$ at time $t$. For a given filtration $\mathbb{G}$, the space of Malliavin differentiable processes $\mathbb{L}_{a}^{1,p}(\R^d; \mathbb{G})$ is the space of $\R^d$-valued, $\mathbb{G}$-adapted processes $X$ such that for almost every $t\in [0,T]$, $X_t \in \mathbb{D}^{1,p}(\R^d)$;  the $(d_I + d_C) \times d$-dimensional process $(D_sX_t)_{s\in [0,T]}$ admits a square integrable progressively measurable version and 
\begin{equation*}
    \|X\|^p_{\mathbb L^{1,p}_a(\R^k; \mathbb{G})}\ce \E\bigg[\bigg(\intT|X_t|^2dt\bigg)^{p/2}\bigg] + \E\bigg[\pa{\int_0^T\int_0^T|D_sX_t|_F^2dsdt}^{p/2}\bigg]<\infty,
\end{equation*}
where the $|\cdot|_F$ is the Frobenius norm. We borrow these tools mainly for the following compactness criterion due to \cite{Malliavin1992compact}. See also \cite[Corollary 5.3]{Menoukeu-Meyer-Nilssen13}.

 \begin{proposition}\label{Prop:MalliavinCompactVariable}
     A sequence of $\F^0_T$-measurable random variables $(\xi^n)_{n \in \N} \in \mathbb{D}^{1, 2}(\R^k)$ is relatively compact in $L^2(\Omega)$ if it is uniformly bounded in $L^2(\Omega)$, and there exist $\delta > 0, C > 0$ such that for all $n \in \N$ and $0 \leq s \leq t \leq T$
     \begin{equation*}
         \E[\norm{D_t\xi^n - D_{s}\xi^n}^2] \leq C|t - s|^{\delta} \text{ and } \sup_{0 \leq u \leq T}\E[\norm{D_u\xi^n}^2] \leq C.
     \end{equation*}
 \end{proposition}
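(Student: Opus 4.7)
The plan is to combine the Wiener chaos decomposition with a Kolmogorov–Riesz compactness argument on the kernels, following the original Da Prato–Malliavin–Nualart strategy. Since each $\xi^n$ is $\F^0_T$-measurable and square-integrable, it admits a chaos expansion $\xi^n = \sum_{k \geq 0} I_k(f_k^n)$ into multiple Wiener--It\^o integrals against $W^0$ with symmetric kernels $f_k^n \in L^2([0,T]^k)$, for which the isometry $\E[|I_k(f_k^n)|^2] = k!\|f_k^n\|^2$ and the Malliavin derivative formula $D_u I_k(f_k^n) = k I_{k-1}(f_k^n(\cdot, u))$ hold. The uniform bound $\sup_u \E[\|D_u\xi^n\|^2] \leq C$ integrates to $\E\int_0^T \|D_u\xi^n\|^2\,du \leq CT$, which by chaos orthogonality equals $\sum_{k \geq 1} k \cdot k!\,\|f_k^n\|_{L^2([0,T]^k)}^2$. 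Consequently $\sum_{k \geq N} k!\|f_k^n\|^2 \leq CT/N$ uniformly in $n$, so the tail of the chaos expansion contributes arbitrarily little to the $L^2(\Omega)$-norm of $\xi^n$ for large $N$.

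Next, for each fixed $k$ I would establish relative compactness of $\{I_k(f_k^n)\}_n$ in $L^2(\Omega)$, which by the chaos isometry reduces to relative compactness of $\{f_k^n\}_n$ in $L^2([0,T]^k)$. The uniform $L^2$-bound is inherited from $\E[|\xi^n|^2] \leq C$, while the H\"older hypothesis combined with the chaos expansion $D_t\xi^n - D_s\xi^n = \sum_{k \geq 1} k I_{k-1}(f_k^n(\cdot, t) - f_k^n(\cdot, s))$ yields
$$k \cdot k!\,\|f_k^n(\cdot, t) - f_k^n(\cdot, s)\|^2_{L^2([0,T]^{k-1})} \leq C|t-s|^\delta,$$
i.e.\ uniform H\"older continuity of $t \mapsto f_k^n(\cdot, t)$ in $L^2([0,T]^{k-1})$. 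By the symmetry of the kernels, this equicontinuity transfers to translations in each of the $k$ coordinates, and the Kolmogorov--Riesz--Fr\'echet theorem on the bounded domain $[0,T]^k$ delivers the desired compactness.

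A standard diagonal extraction concludes the argument: given $\varepsilon > 0$, choose $N$ so that the tail beyond $N$ is controlled by $\varepsilon$ uniformly in $n$, extract nested subsequences along which $I_k(f_k^n)$ converges in $L^2(\Omega)$ for each $0 \leq k \leq N$, and combine via the triangle inequality to produce a Cauchy subsequence in $L^2(\Omega)$. The main technical subtlety is in the second step: one must verify carefully that the H\"older bound in a single slot, propagated via the permutation symmetry of the Wiener chaos kernels, actually produces the coordinate-wise equicontinuity of translations required by Kolmogorov--Riesz on $[0,T]^k$. This is where the symmetry of the chaos kernels, together with the uniformity of all bounds in $n$, plays the decisive role.
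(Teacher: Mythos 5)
Your argument is correct and is essentially the proof of the cited source: the paper itself does not prove this proposition but imports it from Da Prato--Malliavin--Nualart \cite{Malliavin1992compact} (see also \cite[Corollary 5.3]{Menoukeu-Meyer-Nilssen13}), whose argument is exactly the one you reconstruct --- chaos decomposition, tail control of the expansion from $\sup_u\E[\|D_u\xi^n\|^2]$, kernel compactness in $L^2([0,T]^k)$ via the H\"older bound, symmetry, and Kolmogorov--Riesz, followed by a diagonal extraction. The only point to make explicit when invoking Kolmogorov--Riesz on the bounded domain $[0,T]^k$ is that no mass concentrates near the boundary, which follows uniformly in $n$ from $\|f_k^n(\cdot,t)\|_{L^2([0,T]^{k-1})}^2\le C/(k^2(k-1)!)$ for a.e.\ $t$ together with the kernels' symmetry.
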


By leveraging this compactness criterion, the authors of \cite{Menoukeu-Meyer-Nilssen13} demonstrate that the unique solution to SDEs with bounded measurable drifts and constant diffusions is Malliavin differentiable. We summarize their primary results in the following proposition, which serves as a crucial component in our proof.
 \begin{definition}\label{Def:calS}
     For fixed $C > 0$ and $\delta > 0$, define $\S_{C, \delta}(\R^{k})$ to be the subset of $\mathbb L_{1,2}^a(\R^{k}; \mathbb{F})$ whose elements are stochastic processes $M$ satisfying $\E\sqbra{|M_t|^2}\le C$ for all $t \in [0, T]$ and
     \begin{equation*}
     \E[\norm{D_sM_t - D_{s'}M_t}^2] \leq C|s - s'|^{\delta}, \text{ and } \sup_{0 \leq u \leq t} \E[\norm{D_uM_t}^2]\leq C.
     \end{equation*}    
     for all $0 \leq s, s' \leq t \leq T$.
 \end{definition}
 \begin{proposition}\label{proposition:MalliavinDiffBounded}
     For uniformly bounded, Borel-measurable functions $\tilde{b}: [0, T] \times \R^{d_I} \times \R^{d_C} \to  \R^{d_I}$, the following SDE has a unique strong solution: 
     \begin{equation}\label{label:systemSDE}
         \begin{cases}
         d\widetilde{X}_t = \tilde{b}(t, \widetilde{X}_t, X^c_t)dt + \sigma dW_t + \sigma^0 dW^0_t, \quad \widetilde{X}_0 = \xi\\
         dX^c_t= b^c(t, X^c_t)dt + \sigma^c dW^0_t, \quad X^c_0 = \xi^c.
     \end{cases}
     \end{equation}
     The solution $(\widetilde{X}, X^c)$ is in $\bL_{1,2}^a(\R^{d_I} \times \R^{d_C}, \mathbb{F})$. For any sequence of uniformly bounded, smooth drifts $(\tilde{b}_n, b^{c}_n)$ that approximates $(\tilde{b}, b^c)$ a.e. with respect to the Lebesgue measure with associated (unique) solutions $(\widetilde{X}^n, X^{c, n})$, there exists a subsequence $\{n_k\}_{k \in \N}$ such that $(\widetilde{X}^{n_k}, X^{c, n_k})$ converges to $(\widetilde{X}, X^c)$ in $\mathbb{H}^2(\R^{d_I} \times \R^{d_C}, \mathbb{F})$.
     Moreover, there exists $C > 0$ that only depends on $\lVert\tilde{b}\rVert_{\infty}, \lVert b^c\rVert_{\infty}$, and the dimensions $d_I, d_C$ such that $(\widetilde{X}^{n}, X^{c, n}) \in  \S_{C,1}(\R^{d_I}) \times \S_{C, 1}(\R^{d_C})$ for all $n$. 
 \end{proposition}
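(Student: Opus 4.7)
The plan is to combine classical strong well-posedness of SDEs with bounded measurable drift and constant, non-degenerate diffusion with uniform Malliavin--Sobolev estimates along a smooth approximation, then to invoke the compactness criterion of Proposition \ref{Prop:MalliavinCompactVariable} to identify the limit. First I would recast the system \eqref{label:systemSDE} as a single $\R^{d_I+d_C}$-valued SDE with block diffusion
\begin{equation*}
\Sigma = \begin{pmatrix} \sigma & \sigma^0 \\ 0 & \sigma^c\end{pmatrix},
\end{equation*}
which is invertible by Assumption \ref{Assumption:E1}, and bounded measurable drift $(\tilde b(t,x,x^c),\,b^c(t,x^c))$. Strong existence and pathwise uniqueness then follow from the Veretennikov--Zvonkin theorem.

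Next, I would mollify to obtain smooth approximations $(\tilde b_n,b^c_n)$ converging a.e.\ to $(\tilde b,b^c)$ with $\|\tilde b_n\|_\infty \le \|\tilde b\|_\infty$ and $\|b^c_n\|_\infty \le \|b^c\|_\infty$, and denote by $(\widetilde X^n, X^{c,n})$ the associated unique strong solutions. For each fixed $n$ these are classically Malliavin differentiable, with Malliavin derivatives satisfying a linear variational SDE driven by $\nabla_x \tilde b_n$ and $\nabla_x b^c_n$. Directly applying Gronwall to this variational equation fails, since these gradients blow up as $n\to\infty$. Instead, following \cite{Menoukeu-Meyer-Nilssen13}, one exploits the Gaussian smoothing provided by the non-degenerate $\Sigma$: either via a Bismut-type representation of the Malliavin derivative after a Girsanov change of measure, or via explicit heat-kernel estimates on the transition density. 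This yields uniform bounds of the form
\begin{equation*}
\sup_{n}\sup_{0\le u\le t}\E\big[|D_u\widetilde X^n_t|^2\big] \le C, \qquad \sup_n \E\big[|D_s\widetilde X^n_t - D_{s'}\widetilde X^n_t|^2\big] \le C|s-s'|,
\end{equation*}
and similarly for $X^{c,n}$, with $C$ depending only on $\|\tilde b\|_\infty$, $\|b^c\|_\infty$, $d_I$, and $d_C$. In particular $(\widetilde X^n, X^{c,n}) \in \S_{C,1}(\R^{d_I}) \times \S_{C,1}(\R^{d_C})$ uniformly in $n$.

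Armed with these uniform bounds, Proposition \ref{Prop:MalliavinCompactVariable} applied pointwise in $t$ gives relative compactness of $\{\widetilde X^n_t\}_n$ and $\{X^{c,n}_t\}_n$ in $L^2(\Omega)$ for each $t$. Combined with the uniform bound $\E|\widetilde X^n_t|^2+\E|X^{c,n}_t|^2 \le C$ and dominated convergence, a diagonal extraction yields a subsequence $(\widetilde X^{n_k}, X^{c,n_k})$ converging in $\mathbb{H}^2(\R^{d_I}\times\R^{d_C},\mathbb{F})$ to a limit $(\widetilde X^\infty, X^{c,\infty})$. I would then identify this limit as a solution of \eqref{label:systemSDE} by passing to the limit in the integral form of the SDE, using the a.e.\ convergence of $(\tilde b_n, b^c_n)$ together with the $L^2$-convergence of the states; pathwise uniqueness forces $(\widetilde X^\infty, X^{c,\infty}) = (\widetilde X, X^c)$. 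Finally, closedness of the Malliavin derivative operator together with the uniform bound on $\{D(\widetilde X^n, X^{c,n})\}_n$ yields $(\widetilde X, X^c)\in \bL_{1,2}^a(\R^{d_I}\times\R^{d_C},\mathbb{F})$.

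The principal obstacle is obtaining the uniform Malliavin-derivative estimate on $D\widetilde X^n$: naive differentiation of the variational SDE is doomed because $\nabla_x\tilde b_n$ is not uniformly bounded in $n$. Overcoming this is precisely the core technical contribution of \cite{Menoukeu-Meyer-Nilssen13}, and rests on the regularization afforded by the non-degeneracy of $\Sigma$. All other steps are routine combinations of Veretennikov's theorem, standard mollification, and the compactness criterion in Proposition \ref{Prop:MalliavinCompactVariable}.
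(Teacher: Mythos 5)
Your proposal follows essentially the same route as the paper: rewrite \eqref{label:systemSDE} as a single $\R^{d_I+d_C}$-valued SDE with the invertible block matrix $\Sigma$ and bounded measurable drift, then invoke the machinery of \cite{Menoukeu-Meyer-Nilssen13} (their Theorem 3.3 and its supporting lemmas) for strong well-posedness, the uniform Malliavin--Sobolev bounds along the mollified drifts with H\"older exponent $\delta=1$, and the $L^2$-compactness identification of the limit. The only cosmetic differences are that the paper obtains existence and uniqueness directly from \cite{Menoukeu-Meyer-Nilssen13} rather than citing Veretennikov--Zvonkin separately, and it cites the specific convergence lemmas of that reference where you sketch passing to the limit in the integral equation (a step which, for merely measurable drifts, itself relies on the non-degeneracy of $\Sigma$ as handled there).
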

 \begin{proof}
     First recall that $(\xi, \xi^c)$ are independent of $(W, W^0)$, so the initial conditions do not affect the Malliavin differentiability of the solution. Consider the following SDE:
     $$d\hat{X}_t = \Sigma^{-1}\hat{b}(t, \Sigma \hat{X}_t)dt + dB_t, \quad \hat{X}_0 = (\xi, \xi^c)^\intercal \in L^{2p}(\R^{d_I + d_C})$$
    where for $(t, x, x^c) \in [0, T] \times \R^{d_I} \times \R^{d_C}$:
     $$\Sigma \ce \begin{pmatrix}
         \sigma & \sigma^0\\
         0 & \sigma^c
     \end{pmatrix} \in \R^{d_I + d_C} \times \R^{d_I + d_C}, \quad \hat{b}(t, (x,x^c)^\intercal) \ce \begin{pmatrix}
         \tilde{b}(t, x, x^c)\\
         b^c(t, x^c)
     \end{pmatrix} \in \R^{d_I + d_C}.$$
     By applying \cite[Theorem 3.3]{Menoukeu-Meyer-Nilssen13}, we obtain the unique, Malliavin differentiable solution $\hat{X}$. We can then define $(\widetilde{X}, \widetilde{X}^c) \ce \Sigma\hat{X}$, which can be readily verified to be the unique solution to \eqref{label:systemSDE}. The remaining claims follow directly from the proof of \cite[Theorem 3.3]{Menoukeu-Meyer-Nilssen13}. Specifically, \cite[Lemma 3.5, Corollary 3.6]{Menoukeu-Meyer-Nilssen13} establish the existence of constants $C, \delta > 0$ such that $(\widetilde{X}^{n}, X^{c, n}) \in \S_{C,\delta}(\R^{d_I}) \times \S_{C,\delta}(\R^{d_C})$  for all $n$. In fact, a careful read of the proof of \cite[Lemma 3.5]{Menoukeu-Meyer-Nilssen13} reveals that $\delta  = 1$.
    Furthermore, \cite[Lemma 3.14, Lemma 3.16]{Menoukeu-Meyer-Nilssen13} guarantee the existence of a subsequence $\{(\widetilde{X}^{n_k},  X^{c, n_k})\}_k$ that converges to $(\widetilde{X}, X^c)$ pointwise in $L^2(\Omega)$. By the dominated convergence theorem, this subsequence also converges in $\bH^2(\R^{d_{I}}\times \R^{d_C}, \mathbb{F})$.
 \end{proof}

Define $\X$ to be the set of pairs $(\widetilde{X}, X^c)$ that (uniquely) solve the system of SDEs \eqref{label:systemSDE} for some measurable, bounded $\tilde{b}$ with bound $\lVert\tilde{b}\rVert_\infty \leq \norm{b}_\infty$. By Proposition \ref{Prop:MalliavinCompactVariable}, there exists $C >0$ such that $\X \subset \overline{\S_{C,1}(\R^{d_I})} \times \overline{\S_{C, 1}(\R^{d_C})}$, where the closure is taken in $\mathbb{H}^2(\R^{d_I} \times \R^{d_C}, \mathbb{F})$.
 \begin{corollary}\label{Cor:MalliavinCompact}
The set $\X$ is precompact in $\bH^2(\R^{d_I} \times \R^{d_C}, \mathbb{F})$.
 \end{corollary}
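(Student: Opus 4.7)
The plan is to use Proposition \ref{Prop:MalliavinCompactVariable} as a time-slice compactness tool and then upgrade to compactness in $\bH^2$ via an Arzelà--Ascoli type argument that relies on uniform equicontinuity of the processes in the time variable. The two key uniform inputs---Malliavin bounds and pathwise time-regularity---are supplied by Proposition \ref{proposition:MalliavinDiffBounded} and by the boundedness of the drifts, respectively.

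By Proposition \ref{proposition:MalliavinDiffBounded}, every $(\widetilde X, X^c) \in \X$ lies in $\S_{C,1}(\R^{d_I})\times \S_{C,1}(\R^{d_C})$ for a constant $C$ that depends only on $\norm{b}_\infty$, $\norm{b^c}_\infty$ and the dimensions; in particular $C$ is uniform over $\X$. Fixing $t \in [0,T]$, the family $\cbra{(\widetilde X_t, X^c_t): (\widetilde X, X^c)\in \X}$ then satisfies all the hypotheses of Proposition \ref{Prop:MalliavinCompactVariable}: uniform $L^2(\Omega)$ boundedness, a uniform $L^2$ bound on the Malliavin derivative, and the $\delta=1$ Hölder estimate of the derivative in its time argument. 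Consequently, each such time-slice is relatively compact in $L^2(\Omega;\R^{d_I+d_C})$.

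Next I would establish uniform equicontinuity in $t$ of the processes themselves. Using \eqref{label:systemSDE}, the bounds $\norm{\tilde b}_\infty \leq \norm{b}_\infty$ and boundedness of $b^c$, together with the Itô isometry, one obtains
\begin{equation*}
\E[|\widetilde X_t - \widetilde X_s|^2] + \E[|X^c_t - X^c_s|^2] \leq C|t-s|, \qquad 0 \leq s \leq t \leq T,
\end{equation*}
with $C$ independent of the defining drift $\tilde b$. Hence the curve $t \mapsto (\widetilde X_t, X^c_t)$ in $L^2(\Omega)$ is equicontinuous over $\X$.

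To conclude, given any sequence $\cbra{(\widetilde X^n, X^{c,n})}_n \subset \X$, I would extract via a diagonal argument over a countable dense subset of $[0,T]$ a subsequence converging in $L^2(\Omega)$ at each of those times; the uniform time-equicontinuity then promotes this subsequence to a Cauchy sequence in $\bH^2(\R^{d_I}\times\R^{d_C}, \mathbb{F})$, which is complete. Equivalently, one may approximate each process by its piecewise-constant evaluation on a fine partition of $[0,T]$---the approximation error being uniformly small by the equicontinuity---and observe that on a fixed partition the family is contained in a finite product of relatively compact subsets of $L^2(\Omega)$, yielding finite $\varepsilon$-nets for arbitrary $\varepsilon>0$ and therefore total boundedness. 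The main obstacle is precisely the uniformity over $\X$ of the Malliavin bounds and of the time-equicontinuity; both are delivered by Proposition \ref{proposition:MalliavinDiffBounded} and by the fact that the definition of $\X$ restricts to drifts with the common bound $\norm{b}_\infty$, after which the Arzelà--Ascoli upgrade is routine.
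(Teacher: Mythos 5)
Your argument is correct, but it upgrades the pointwise-in-time compactness to $\bH^2$-compactness by a genuinely different mechanism than the paper. Both proofs share the same essential input: the uniform constant $C$ from Proposition \ref{proposition:MalliavinDiffBounded} makes each time-slice family $\cbra{\widetilde X_t}$ relatively compact in $L^2(\Omega)$ via Proposition \ref{Prop:MalliavinCompactVariable}. From there the paper first identifies a candidate limit: it applies Dunford--Pettis to the uniformly bounded drift processes $q^n = \tilde b^n(\cdot,\widetilde X^n_\cdot,X^c_\cdot)$ to get a weak $\bH^2$-limit $q^\infty$, defines $\widetilde X^\infty$ as the corresponding It\^o process, and then uses the principle that a relatively compact sequence with a unique weak limit converges strongly, finishing with dominated convergence in $t$. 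You instead run an Arzel\`a--Ascoli scheme: the uniform modulus $\E[|\widetilde X_t-\widetilde X_s|^2]\le C|t-s|$ (immediate from the common drift bound and the Brownian increments) plus a diagonal extraction over a countable dense set of times gives a subsequence that is Cauchy in $L^2(\Omega)$ uniformly in $t$, hence in $\bH^2$. Your route is more elementary (no weak compactness/Dunford--Pettis) and in fact yields the stronger conclusion of precompactness for the topology of $\sup_{t}\lVert\cdot\rVert_{L^2(\Omega)}$; the paper's route buys an explicit identification of the limit as an It\^o process with drift $q^\infty$ and full-sequence (not just subsequential) convergence at each time, a by-product it reuses in the proof of Lemma \ref{lemma:calK}. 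One small caveat: you assert that every element of $\X$ lies in $\S_{C,1}\times\S_{C,1}$, whereas Proposition \ref{proposition:MalliavinDiffBounded} as stated only places the smooth-drift approximants there and the paper accordingly works with the $\bH^2$-closure $\overline{\S_{C,1}}$; this requires either passing the Malliavin bounds to the limit or an extra $\varepsilon$-net approximation step, but it affects both proofs equally and is routine to patch.
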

 \begin{proof}
      Take a sequence in $\X$ with first coordinate $\widetilde{X}^n$ corresponding to drifts $\tilde{b}^n$. Denote by $q^n$ the uniformly bounded processes $\tilde{b}^n(\cdot, \widetilde{X}^n_\cdot, X^c_{\cdot})$. Dunford-Pettis theorem implies that along a subsequence, there exists a $\P\otimes dt$-unique $q^\infty$ such that $q^n \nto q^\infty$ weakly in $\bH^2(\R^{d_I}, \mathbb{F})$. Without loss of generality, we assume the whole sequence $q^n$ converges weakly to $q^\infty$. Define $\widetilde{X}^{\infty}$ by $$\widetilde{X}^{\infty}_t \ce \xi + \intt q^\infty_sds + \sigma W_t + \sigma^0W^0_t, \quad t \in [0, T].$$ Then for almost every $t$, the random variables $\widetilde{X}^n_t$ converges weakly to $\widetilde{X}^{\infty}_t$ in $L^2(\Omega)$. Indeed, for an arbitrary $\R^{d_I}$-valued random variable $Z \in L^2(\Omega)$, 
     \begin{align*}
     \limn \E[Z \cdot \widetilde{X}^n_t] & = \limn \E\sqbra{\intT \ind{s \leq t} Z \cdot q^n_sds + Z\cdot (\xi + \sigma W_t + \sigma^0W^0_t)}\\
     & =  \E\sqbra{\intT \ind{s \leq t} Z \cdot q^\infty_sds + Z\cdot (\xi + \sigma W_t + \sigma^0W^0_t)} = \E[Z \cdot \widetilde{X}^{\infty}_t].
     \end{align*}
     Since $\widetilde{X}^n \in \overline{\S_{C,1}(\R^{d_I})}$, the sequence $(\widetilde{X}^n_t)_{n \geq 1}$ is also relatively compact in $L^2(\Omega)$. This yields a subsequence $(\widetilde{X}^{n_k}_t)_k$ that converges (strongly) in $L^2(\Omega)$. By uniqueness of the weak limit $\widetilde{X}^\infty_t$, the whole sequence $\widetilde{X}^n_t$ converges strongly to $\widetilde{X}^{\infty}_t$ in $L^2(\Omega)$. Since $t$ was taken arbitrarily, dominated convergence implies that $\lVert\widetilde{X}^n - \widetilde{X}^{\infty}\rVert_{\bH^2(\R^{d_I}, \mathbb{F})} \nto 0$.
 \end{proof}

 We denote by $\M_{C, 1} \subset \calM$ the set of processes $\cbra{\calL^{\P}(\widetilde{X}_\cdot\mid X^c_\cdot): (\widetilde{X}, X^c) \in \X}$.  Before applying Brouwer's fixed point theorem, we first interpret elements in $\calM$ as processes that take values in the space of finite signed measures $\mu$ over $\R^{d_I}$ with finite first absolute moment. We endow this space with a variant of the Kantorovich-Rubinstein norm, defined as follows:
 \begin{equation*}
     \lVert\mu\rVert_{\text{KR}} \ce |\mu(\R^{d_I})| + \sup_{\phi \in \text{Lip}_1(\R^{d_I})}\int_{\R^{d_I}}\phi(x)d\mu(x).
 \end{equation*}
 Notably, the metric induced by this norm coincides with $\calW_1$ when restricted to $\calP_1$. Define the set $\calK\ce \cobar\pa{\M_{C, 1}}$, the closed convex hull of $\M_{C, 1}$ in $\calM$.

 \begin{lemma}\label{lemma:calK}
     $\calK$ is a convex and compact subset of $\calM$. 
     Moreover, for each $m \in \calK$ there exists a unique, jointly measurable function $\varphi_m: [0, T] \times \R^{d_C} \to \calP_p(\R^{d_I})$ such that for every $t \in [0,T]$ we have $m_t = \varphi_m(t, X^c_t)$, $\P$-almost surely.
 \end{lemma}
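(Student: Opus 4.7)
Convexity of $\calK$ is immediate from its definition as a closed convex hull, so the work is in compactness and the jointly measurable representation $m_t = \varphi_m(t, X^c_t)$. My strategy for compactness is to first prove $\M_{C,1}$ is precompact in $\calM$ and then apply Mazur's theorem, regarding $\calM$ as a subset of the Banach space of processes valued in the finite signed measures with the Kantorovich--Rubinstein norm, which on a set of uniformly bounded $p$-th moment generates the same topology as $\calW_p$.

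To establish precompactness of $\M_{C,1}$, I take a sequence $m^n_t = \calL^{\P}(\widetilde X^n_t \mid X^c_t)$ with $(\widetilde X^n, X^c) \in \X$. Corollary \ref{Cor:MalliavinCompact} extracts a subsequence $\widetilde X^{n_k} \to \widetilde X$ in $\bH^2$. For each $\phi \in \text{Lip}_1(\R^{d_I})$, the scalar process $t \mapsto \int \phi\, dm^{n_k}_t = \E[\phi(\widetilde X^{n_k}_t) \mid X^c_t]$ converges in $\bH^2$ by the $L^2$-contraction property of conditional expectation. Together with uniform moment bounds on $\widetilde X^n$ from Assumption \ref{Assumption:E} and the bounded-drift estimates underlying Proposition \ref{proposition:MalliavinDiffBounded} (which yield uniform integrability of $|x|^p$ against $m^n_t$), this upgrades $\calW_1$-convergence to $\calW_p$-convergence $\P \otimes dt$-a.e., and hence to convergence in $d_\calM$ by dominated convergence.

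For the representation, when $m \in \M_{C,1}$ the random element $m_t$ is $\sigma(X^c_t)$-measurable by definition, and Doob--Dynkin supplies a Borel $\varphi_m(t, \cdot): \R^{d_C} \to \calP_p(\R^{d_I})$ with $m_t = \varphi_m(t, X^c_t)$. This property extends to convex combinations and passes to $d_\calM$-limits: along a subsequence with $\calW_p(m^{n_k}_t, m_t) \to 0$ for $\P \otimes dt$-a.e.\ $(\omega, t)$, testing against a countable convergence-determining subfamily of $\text{Lip}_1$ shows $m_t$ remains $\sigma(X^c_t)$-measurable. The main obstacle is \emph{joint} Borel measurability of $\varphi_m$ in $(t, x^c)$: I would fix such a countable family $\{\phi_j\} \subset \text{Lip}_1$, use the $\mathbb{F}^0$-progressive measurability of the scalar processes $t \mapsto \int \phi_j\, dm_t$ combined with their pointwise $\sigma(X^c_t)$-measurability to produce, via regular conditional probabilities of $m$ against $X^c$, jointly Borel $\Psi_j: [0,T] \times \R^{d_C} \to \R$ with $\int \phi_j\, dm_t = \Psi_j(t, X^c_t)$, and then reassemble. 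Uniqueness modulo Lebesgue-null sets on $\{t > 0\}$ follows from the strict positivity of the density of $X^c_t$, ensured by Assumption \ref{Assumption:E1} (invertibility of $\sigma^c$ and boundedness of $b^c$) via a Girsanov argument.
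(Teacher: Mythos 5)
Your proposal is correct and follows essentially the same route as the paper: precompactness of $\M_{C,1}$ via Corollary \ref{Cor:MalliavinCompact}, compactness of the closed convex hull in the Kantorovich--Rubinstein topology, and a disintegration/Doob--Dynkin argument for the measurable representation. The only notable difference is that the paper shortcuts your $\calW_1$-duality-plus-uniform-integrability upgrade by bounding $\calW_p$ directly through the trivial coupling, namely $\calW_p^p(m^n_t, m^\infty_t) \le \E\big[|\widetilde X^n_t - \widetilde X^\infty_t|^p \mid X^c_t\big]$, followed by the tower property.
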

 \begin{proof}
    We demonstrate the continuity of the mapping $\X \ni (\widetilde{X}, X) \mapsto \calL^{\P}(\widetilde{X}_\cdot\mid X^c_\cdot) = m \in \M$. Consider a sequence $(\widetilde{X}^n, X^{c})_n$ in $\X$ where $\widetilde{X}^n$ converges to $\widetilde{X}^\infty$ and $X^c$ is fixed. 
    Let $m^n$ and $m^\infty$ denote the corresponding conditional law processes for $\widetilde{X}^n$ and $\widetilde{X}^\infty$, respectively, given $X^c$. In light of the definition of $d_\calM$ in \eqref{eq:calMmetric}, Jensen's inequality gives 
    \begin{equation}\label{eq:dMJensen}
        d_\calM^p(m^n, m^\infty) \leq T^{p/2 - 1}\E\bigg[\intT \calW^p_p(m^n_t, m^\infty_t)dt\bigg],
    \end{equation}
    so it suffices to show that $\E\big[\calW^p_p(m^n_t, m^\infty_t)\big] \nto 0$  for almost every $t \in [0, T]$.
    
    Note that $\P\circ (X^c_t)^{-1}$ is absolutely continuous with respect to Lebesgue measure (by e.g. Girsanov's theorem). By definition of Wasserstein distance, for almost every $x^c \in \R^{d_C}$ and $t \in [0, T]$ we have
    \begin{equation*}
        \calW^p_p\Big(\L^{\P}(\widetilde{X}^n_t|X^c_t = x^c), \L^{\P}(\widetilde{X}^\infty_t|X^c_t = x^c)\Big) \leq \E\Big[|\widetilde{X}^n_t - \widetilde{X}^\infty_t|^p \Big | X^c_t = x^c\Big].
    \end{equation*}
    From the proof of Corollary \ref{Cor:MalliavinCompact}, we know that $\widetilde{X}^n_t$ also converges to $\widetilde{X}^{\infty}_t$ in $L^2(\Omega)$ for almost every $t$. Therefore, 
    \begin{equation*}
        \E\Big[\calW^p_p(m^n_t, m^\infty_t)\Big] \leq \E\Big[\E\big[|\widetilde{X}^n_t - \widetilde{X}^\infty_t|^p \big | X^c_t\big]\Big] \leq \E\Big[|\widetilde{X}^n_t - \widetilde{X}^\infty_t|^p \Big]  \nto 0,
    \end{equation*}
    which shows continuity of the mapping $\X \ni (\widetilde{X}, X) \mapsto \calL^{\P}(\widetilde{X}_\cdot\mid X^c_\cdot) = m \in \M$.
    This implies that $\calM_{C, 1}$ is pre-compact. The closed convex hull of a pre-compact set is compact (e.g \cite[Theorem 5.35]{bookInfDimAnalysis06}) and obviously convex.

     Regarding the representation of $m$ by jointly measurable functions, note that each $(\widetilde{X}, X^c) \in \X$ is progressively measurable. By disintegration theorem, there exists a $\P \circ (X^c)^{-1}$-unique family of measurable functions $\varphi_m: [0, T] \times \R^{d_C} \to \calP_p(\R^{d_I})$ such that $m_t = \varphi_m(t, X^c_t)$ almost surely for every $t$. In other words, elements in $\calM_{C, 1}$ admit this representation, as do elements in $\calK$, since measurability is preserved under convex combinations and closure.
 \end{proof}

 \subsection{Proof of Theorem \ref{thm:main}}
 We establish the existence of MFG equilibria by identifying fixed points of a mapping from $\calK$ to $\M_{C,1}$. To do so, we first prove an intermediate result for the MFG under weak formulation.\\

 \noindent\emph{Proof of Theorem \ref{Theorem:MainWeak}:} We split the proof into three steps.\\
 \emph{Step 1: Optimality via Comparison Principle.} Boundedness of $|\sigma^{-1}b|$ ensures that $H$ and $h$ are Lipschitz in $z$. Let $m \in \calK$ be given. For each $\alpha \in \A_w$, the growth conditions in Assumption \ref{Assumption:E2} implies that the process $H(\cdot, X_\cdot, m_\cdot, \alpha_\cdot, 0)$ belongs to $\mathbb{H}^2(\R,\mathbb F)$, and the random variable $g(X_T, m_T)$ is square integrable.
 Thus, by \cite[Theorem 2.1]{el1997backward} there is a unique solution $(Y^{m,\alpha},Z^{m,\alpha},Z^{0,m,\alpha}) \in \mathbb{S}^2(\R, \mathbb F)\times \mathbb{H}^2(\R^{d_I}, \mathbb F)\times \mathbb{H}^2(\R^{d_C}, \mathbb F)$ to the Lipschitz BSDE
 \begin{align*}
     Y^{m,\alpha}_t = g(X_T, m_T) + \int_t^TH(s&, X_s, m_s, \alpha_s, Z^{m,\alpha}_s)ds\\
     & -\int_t^TZ^{m, \alpha}_sdW_s - \int_t^TZ^{0,m,\alpha}_sdW_s^0.
 \end{align*}
 By Girsanov's theorem, we have
 \begin{equation*}
     Y^{m, \alpha}_t = \E^{\P^{\alpha, m}}\Big[g(X_T, m_T) + \int_t^Tf(s, X_s, m_s, \alpha_s)ds\mid \F_t\Big],
 \end{equation*}
 which implies $J^m_{w}(\alpha) = \E^{\P^{\alpha, m}}[Y^{m,\alpha}_0] = \E[Y^{m,\alpha}_0]$ as the two probability measures $\P^{\alpha, m}$ and $\P$ agree on $\F_0$. By \cite[Theorem 2.1]{el1997backward} again, there exists a unique $(Y^m, Z^m, Z^{0,m}) \in  \mathbb{S}^2(\R, \mathbb F)\times \mathbb{H}^2(\R^{d_I}, \mathbb F)\times \mathbb{H}^2(\R^{d_C}, \mathbb F)$ solving
 \begin{equation}
 \label{eq:optim.BSDE}
     Y^{m}_t = g(X_T, m_T) + \int_t^Th(s, X_s, m_s, Z^{m}_s)ds -\int_t^TZ^{m}_sdW_s - \int_t^TZ^{0,m}_sdW_s^0.
 \end{equation}
Let $\hat\alpha_t = \hat a(t, X_t, m_t, Z_t^{m})$, where $\hat{a}$ is the minimizer function defined after Assumption \ref{Assumption:E}. We have $J^m(\hat\alpha) = \E^{\P^{\hat\alpha, m}}[Y^{m}_0]$. Applying the comparison principle for Lipschitz BSDEs \cite[Theorem 2.2]{el1997backward}, we obtain:
 \begin{equation*}
     J^m_{w}(\hat\alpha) = \E^{\P^{\hat\alpha, m}}[Y^{m}_0] = \E[Y^{m}_0] \le \E[Y^{m, \alpha}_0] = \E^{\P^{\alpha, m}}[Y^{m,\alpha}_0] = J^m_{w}(\alpha).
 \end{equation*}
 Thus, $\hat\alpha$ is optimal for $m$. We now define the function $\Phi$ mapping $\calK$ into $\calM$ by
 \begin{equation*}
     \Phi(m) = \mm \ce (\calL^{\P^{\hat{\alpha}, m}}\big(X_t | X^c_t))_{t\in [0,T]}.
 \end{equation*}
 Finally, it follows directly from Definition \ref{Def:MFGsolution} that fixed points of $\Phi$, together with their corresponding optimal control derived from \eqref{eq:optim.BSDE}, constitute MFG equilibria under the weak formulation.
 
 \noindent \emph{Step 2: Showing $\Phi$ maps $\mathcal{K}$ into $\M_{C, 1}$.}
 
 \begin{lemma}\label{Lemma:Mimicking}
     Let $m \in \calK$ be fixed with corresponding $\varphi_m$ from Lemma \ref{lemma:calK} and let $\alpha \in \A_w$ be any admissible control. Under Assumption \ref{Assumption:M}, there exists a Borel measurable function $\alpha_M:[0, T] \times \R^{d_I} \times \R^{d_C} \to A$ such that the Markovian control $\widetilde{\alpha}$ defined by $\widetilde\alpha_t \ce \alpha_M(t, X_t, X^c_t)$ satisfies $J^m_w(\alpha) \geq J^m_w(\widetilde{\alpha})$. Suppose $\alpha = \hat{\alpha}$ from Step 1. Then $\hat\alpha = \widetilde{\alpha}$, $\P\otimes dt$-almost everywhere.
 \end{lemma}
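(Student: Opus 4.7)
The plan is to apply a Brunick--Shreve mimicking argument \cite{brunick2013mimicking} jointly to $(X, X^c)$ under $\P^{\alpha, m}$, and then use the convexity in Assumption \ref{Assumption:M} together with a Filippov-style Borel measurable selection to extract a Markovian control. Throughout, Lemma \ref{lemma:calK} lets me replace $m_t$ by $\varphi_m(t, X^c_t)$, so all functionals reduce to functions of $(t, X_t, X^c_t, \alpha_t)$.

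First, I would view $(X, X^c)$ under $\P^{\alpha, m}$ as an $\R^{d_I + d_C}$-valued It\^o process with drift $\pa{b(t, X_t, \varphi_m(t, X^c_t), \alpha_t),\, b^c(t, X^c_t)}$ and constant invertible diffusion. Brunick--Shreve then produces Borel functions $\bar b : [0,T]\times \R^{d_I}\times \R^{d_C} \to \R^{d_I}$ and $\bar f : [0,T]\times \R^{d_I}\times \R^{d_C} \to \R$ satisfying, $\P^{\alpha, m}\otimes dt$-a.e.,
\begin{equation*}
\bar b(t, X_t, X^c_t) = \E^{\P^{\alpha, m}}\!\sqbra{b(t, X_t, m_t, \alpha_t)\mid X_t, X^c_t}, \quad \bar f(t, X_t, X^c_t) = \E^{\P^{\alpha, m}}\!\sqbra{f(t, X_t, m_t, \alpha_t)\mid X_t, X^c_t},
\end{equation*}
together with a weak Markovian solution $(Y, X^c)$ of $dY_t = \bar b(t, Y_t, X^c_t)dt + \sigma dW_t + \sigma^0 dW^0_t$ whose one-dimensional marginals match those of $(X, X^c)$ under $\P^{\alpha, m}$. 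The $X^c$-coordinate is left unchanged since $b^c$ is already Markovian and $\alpha$-free.

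Next, for each $(t, x, x^c)$ the pair $(\bar b, \bar f)(t, x, x^c)$ is a conditional expectation of points $\pa{b(t, x, \varphi_m(t, x^c), a),\, f(t, x, \varphi_m(t, x^c), a)}$ with $a \in A$, so it lies in the closed convex hull of that set. Assumption \ref{Assumption:M} makes $\cbra{\pa{b(t, x, \varphi_m(t, x^c), a), z} : a \in A,\ z \geq f(t, x, \varphi_m(t, x^c), a)}$ itself convex, so $(\bar b, \bar f)(t, x, x^c)$ already belongs to it, giving some $a^* \in A$ with $b(t, x, \varphi_m(t, x^c), a^*) = \bar b(t, x, x^c)$ and $f(t, x, \varphi_m(t, x^c), a^*) \leq \bar f(t, x, x^c)$. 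A Filippov-style Borel measurable selection (as in \cite{haussmann2006existence, lacker2015mean}) produces $\alpha_M : [0,T]\times \R^{d_I}\times \R^{d_C} \to A$ realizing this pointwise. Setting $\widetilde\alpha_t \ce \alpha_M(t, X_t, X^c_t) \in \A_w$, the pair $(X, X^c)$ under $\P^{\widetilde\alpha, m}$ satisfies the same Markovian SDE system as $(Y, X^c)$ (note that $W^0$ remains a Brownian motion after the Girsanov change of measure), so the strong uniqueness of Proposition \ref{proposition:MalliavinDiffBounded} implies that the one-dimensional marginal laws of $(X, X^c)$ under $\P^{\widetilde\alpha, m}$ agree with those under $\P^{\alpha, m}$. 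Combining this law-matching with the pointwise bound $f(t, X_t, m_t, \widetilde\alpha_t) \leq \bar f(t, X_t, X^c_t)$ yields
\begin{equation*}
J^m_w(\widetilde\alpha) \leq \E^{\P^{\widetilde\alpha, m}}\!\sqbra{\intT \bar f(t, X_t, X^c_t)dt + g(X_T, m_T)} = \E^{\P^{\alpha, m}}\!\sqbra{\intT \bar f(t, X_t, X^c_t)dt + g(X_T, m_T)} = J^m_w(\alpha).
\end{equation*}

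For the ``moreover'' part, if $\alpha = \hat\alpha$, then optimality of $\hat\alpha$ forces $J^m_w(\widetilde\alpha) = J^m_w(\hat\alpha)$, so $\widetilde\alpha$ is also optimal. Comparing the Lipschitz BSDE with driver $H(\cdot, \widetilde\alpha, \cdot)$ to that with driver $h$ as in Step 1 of the proof of Theorem \ref{Theorem:MainWeak} then forces $H(s, X_s, m_s, \widetilde\alpha_s, Z^m_s) = h(s, X_s, m_s, Z^m_s)$ for $\P\otimes dt$-a.e.~$(s, \omega)$, and the uniqueness of the minimizer in Assumption \ref{Assumption:E3} yields $\widetilde\alpha = \hat\alpha$ $\P\otimes dt$-a.e. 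I expect the main technical obstacle to lie in carefully invoking Brunick--Shreve in its joint $(X, X^c)$ form, treating $X^c$ as an extra coordinate with $\alpha$-independent drift so that the mimicking preserves its dynamics, and in verifying Borel (rather than merely universal) measurability of the Filippov selector $\alpha_M$; both are standard under the continuity in $a$ from Assumption \ref{Assumption:E1} but require some care.
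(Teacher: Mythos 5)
Your proposal is correct and follows essentially the same route as the paper: a Filippov-type measurable selection under Assumption \ref{Assumption:M} (the paper invokes \cite[Theorem A.9]{haussmann1990existence} to produce $\alpha_M$ directly, while you first extract $(\bar b,\bar f)$ and then select, which is the same argument in a different order), followed by Brunick--Shreve mimicking, transfer to the original space via the strong uniqueness of Proposition \ref{proposition:MalliavinDiffBounded} and Girsanov, and the strict comparison principle for Lipschitz BSDEs combined with uniqueness of the Hamiltonian minimizer for the ``moreover'' part. The only point you gloss over is that the set in Assumption \ref{Assumption:M} is also closed (by compactness of $A$ and continuity of $b,f$ in $a$), which is needed to pass from the closed convex hull back to the set itself; this is standard and implicitly handled by the cited selection theorem.
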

 \begin{proof}
    Recall that under $\P^{\alpha, m}$, the state process $X$ follows
    \begin{equation*}
        dX_t = b(t, X_t, \varphi_m(t, X^c_t), \alpha_t)dt + \sigma dW^{\alpha, m}_t + \sigma^0 dW^0_t, \quad X_0 = \xi.
    \end{equation*} 
 Assumption \ref{Assumption:M}, along with the continuity of $b, f$ in $a$ and the closedness of $A$, ensures the existence of a measurable function $(t, x, x^c) \mapsto \alpha_M(t, x, x^c)$ satisfying the following:
 \begin{align}
     \label{label:Mimicking1}b(t, X_t, \varphi_m(t, X^c_t), \alpha_M(t, X_t, X^c_t)) & = \E^{\P^{\alpha, m}}\sqbra{b(t, X_t, m_t, \alpha_t) \mid (X_t, X^c_t)}\\
     \label{label:Mimicking2}f(t, X_t, \varphi_m(t, X^c_t), \alpha_M(t, X_t, X^c_t)) & \leq \E^{\P^{\alpha, m}}\sqbra{f(t, X_t, m_t, \alpha_t) \mid (X_t, X^c_t)}
 \end{align}
 almost surely for almost every $t \in [0, T]$ (see e.g. \cite[Theorem A.9]{haussmann1990existence}). 
 By the mimicking theorem \cite[Corollary 3.7]{brunick2013mimicking} and \eqref{label:Mimicking1}, there exists some filtered probability space $(\widetilde{\Omega}, \widetilde{\F}, (\widetilde{\F}_t)_{t \in [0, T]}, \widetilde{\P})$ supporting independent Brownian motions $(\widetilde{W}, \widetilde{W}^0)$, random variables $(\tilde{\xi}, \tilde{\xi^c})$ and a (weak) solution $(\widetilde{X}, \widetilde{X}^c)$ to 
  \begin{equation}\label{eq:MarkovianSDE_M'}
     \begin{cases}
         d\widetilde{X}_t = b(t, \widetilde{X}_t, \varphi_m(t, \widetilde{X}^c_t), \alpha_M(t, \widetilde{X}_t, \widetilde{X}^{c}_t))dt + \sigma d\widetilde{W}_t + \sigma^0 d\widetilde{W}^0_t\\
         d\widetilde{X}^c_t= b^c(t, \widetilde{X}^c_t)dt + \sigma^c d\widetilde{W}^0_t
 \end{cases}
 \end{equation}
 such that $\widetilde{\P} \circ (\widetilde{X}_t, \widetilde{X}^c_t)^{-1} = \P^{\alpha, m} \circ (X_t, X^c_t)^{-1}$ for all $t \in [0, T]$. 

Now define the bounded measurable drift function $b^{m, \alpha}: [0, T] \times \R^{d_I} \times \R^{d_C} \to \R^{d_I}$ as 
 \begin{equation}\label{eq:newdrift}
     b^{m, \alpha}(t, x, x^c) \ce b(t, x, \varphi_m(t, x^c), \alpha_M(t, x, x^c)).
 \end{equation}
 Applying Proposition \ref{proposition:MalliavinDiffBounded} to  this drift yields a unique strong solution $(X^{\alpha_M}, X^c)$ to \eqref{eq:MarkovianSDE_M'} on the original probability space. Uniqueness in law ensures that $\widetilde{\P}\circ (\widetilde{X}_t, \widetilde{X}^c_t)^{-1} = \P\circ(X^{\alpha_M}_t, X^c_t)^{-1}$ for all $t \in [0, T]$. 
 Similarly, define $\widetilde{\alpha} \in \A_w$ by $\widetilde{\alpha}_t \ce \alpha_M(t, X_t, X^c_t)$ where $X$ is again the driftless state process. Then by Girsanov theorem, under $\P^{\widetilde{\alpha}, m}$, the pair $(X, X^c)$ solves the Markovian SDE 
   \begin{equation}\label{eq:MarkovianSDE_M}
     \begin{cases}
         dX_t = b(t, X_t, \varphi_m(t, X^c_t), \alpha_M(t, X_t, X^{c}_t))dt + \sigma dW^{\widetilde{\alpha}, m}_t + \sigma^0 dW^0_t\\
         dX^c_t= b^c(t, X^c_t)dt + \sigma^c dW^0_t.
 \end{cases}
 \end{equation}
 Uniqueness in law of the SDE solution implies that for all $t \in [0, T]$,
 \begin{equation}\label{label:SameJointLaw}
     \P^{\widetilde{\alpha}, m} \circ (X_t, X^c_t)^{-1} = \P \circ (X^{\alpha_M}_t, X^c_t)^{-1} = \widetilde{\P} \circ (\widetilde{X}_t, \widetilde{X}^c_t)^{-1} = \P^{\alpha, m} \circ (X_t, X^c_t)^{-1}
 \end{equation} 
 for almost every $t$. Combining \eqref{label:SameJointLaw}, inequality \eqref{label:Mimicking2} and Fubini's theorem, we conclude:   
 \begin{align*}
         J^{m}_w(\widetilde{\alpha}) & = \E^{\P^{\widetilde{\alpha}, m}}\sqbra{g(X_T, m_T) + \intT f(t, X_t, m_t, \widetilde{\alpha}_t)dt}\\
         & = \E^{\P^{\tilde{\alpha}, m}}\sqbra{g(X_T, \varphi_m(T, X^c_T)) + \intT f(t, X_t, \varphi_m(t, X^c_t), \alpha_M(t, X_t, X^c_t))dt}\\
         & = \E^{\P^{\alpha, m}}\sqbra{g(X_T, \varphi_m(T, X^c_T)) + \intT f(t, X_t, \varphi_m(t, X^c_t), \alpha_M(t, X_t, X^c_t))dt}\\
         & \leq \E^{\P^{\alpha, m}}\sqbra{g(X_T, m_T) + \intT f(t, X_t, m_t, \alpha_t)dt}= J^{m}_w(\alpha).
 \end{align*}
 If $\alpha = \hat{\alpha}$, then this Markovian $\widetilde{\alpha}$ must also be optimal. From the previous step, recall that $(Y^m, Z^m, Z^{0, m}) = (Y^{m, \hat{\alpha}}, Z^{m, \hat{\alpha}}, Z^{0, m, \hat{\alpha}})$ uniquely solves \eqref{eq:optim.BSDE}. 
 For $\beta \in \A_w$, denote by $H^\beta$ the process $$H^\beta_t \ce H(t, X_t, m_t, \beta_t, Z^{m}_t).$$
 Suppose $\hat\alpha$ and $\widetilde{\alpha}$ differ on a set of positive $\P\otimes dt$ measure. The fact that $\hat{\alpha}$ uniquely minimizes the Hamiltonian implies that $H^{\hat{\alpha}} \leq H^{\widetilde{\alpha}}$ $~\P\otimes dt$-a.s., with strict inequality on a set with positive $\P\otimes dt$ measure. 
 By the strict comparison principle for Lipschitz BSDEs \cite[Theorem 2.2]{el1997backward}, this would imply $Y^{\hat{\alpha}}_0 < Y^{\widetilde{\alpha}}_0$ $\P$-almost surely, contradicting the optimality of $\widetilde{\alpha}$. Hence, $\hat{\alpha}$ and $\widetilde{\alpha}$ coincide $\P\otimes dt$-almost everywhere.
 \end{proof}

 Lemma \ref{Lemma:Mimicking} establishes that the optimal control $\hat{\alpha}$ has a Markovian representation $\widetilde{\alpha}$ which $\P^{\hat{\alpha}, m}\otimes dt$-almost everywhere coincides with $\hat{\alpha}$. Furthermore, we in fact have $\P^{\hat{\alpha}, m} = \P^{\widetilde{\alpha}, m}$ by \eqref{eq:Girsanov}. Recall that $(X^{\hat{\alpha}_M}, X^c)$ is the unique solution to \eqref{label:systemSDE} with drift $\tilde{b} = b_m$, as defined in \eqref{eq:newdrift}. By the uniqueness of law from Proposition \ref{proposition:MalliavinDiffBounded}, we obtain: $$\mm_\cdot \equiv \calL^{\P^{\hat{\alpha}, m}}\big(X_\cdot | X^c_\cdot) = \calL^{\P^{\tilde{\alpha}, m}}\big(X_\cdot | X^c_\cdot) = 
 \calL^{\P}\big(X^{\hat\alpha_M}_\cdot | X^c_\cdot).$$
Moreover, $(X^{\hat{\alpha}_M}, X^c) \in \X$, and so $\mm \in \overline{\M_{C, 1}} \subset \calK$.

 \emph{Step 3: The fixed point mapping $\Phi$ is continuous.} 
 Let $(m^n)_n$ be a sequence in $\calK$ converging to some $m^\infty$. For $n \in \N \cup \{\infty\}$, put $\mm^n = \Phi(m^n)$ and $\hat\alpha^n$ the optimal control associated with $m^n$. Define $M^n$ as the stochastic exponential for the change of measure in Girsanov's theorem:
 $$M^n_t \ce \mathcal{E}\pa{\int_0^\cdot \sigma^{-1}b(s, X_s, m^n_s, \hat{\alpha}^n_s)\,dW_s}_t.$$
 By Assumption \ref{Assumption:E1} and It\^{o}'s formula, there exists a uniform bound $C >0$ such that $\E\sqbra{|M^{n}_T|^4} \leq C$ for all $n \in \N \cup \{\infty\}$. Using the inequality $|e^{a} - e^{b}| \leq |a - b||e^{a} + e^{b}|$ and Cauchy-Schwarz inequality, we can find a (possibly different) $C > 0$ such that
 \begin{equation*}
     \E\sqbra{\abs{M^n_T - M^\infty_T}} \leq C\E\sqbra{\intT\abs{\sigma^{-1}\big (b(s, X_s, m^n_s, \hat{\alpha}^n_s) - b(s, X_s, m^{\infty}_s, \hat{\alpha}^\infty_s)\big)}^2ds}^{1/2}.
 \end{equation*}
 Recall that $\hat{\alpha}^n_t = \hat{a}(t, X_t, m^n_t, Z^{m^n}_t)$, where $Z^{m^n}$ is part of the unique solution to the Lipschitz BSDE \eqref{eq:optim.BSDE} with input $m^n$, and $\hat{a}$ is the minimizer function. Since $f$ and $g$ are assumed to be continuous in $m$, applying the stability result of Lipschitz BSDEs (see e.g. \cite[Proposition 2.1]{el1997backward}) yields that $Z^{m^n}$ converges to $Z^{m^\infty}$ in $\mathbb{H}^2$. 
 
Boundedness of $b$ and $b^c$ implies that for all $q \geq 1$: $$\sup_{n\geq1} \E^{\P^{\hat{\alpha}^n, m^n}}\Big[\sup_{t \in [0, T]}|X_t|^q\Big]<\infty, $$ 
which leads to uniform integrability, namely:
\begin{equation}\label{eq:uniform_integrability}
    \lim_{R \to \infty}\limsup_{n \to \infty} \Big(\E^{\P^{\hat{\alpha}^n, m^n}}+\E^{\P^{\hat{\alpha}^\infty, m^\infty}}\Big)\Big[\ind{\norm{X}_\infty > R}\sup_{t \in [0, T]}|X_t|^p\Big] = 0.
\end{equation}
This property establishes equivalence between convergence in $\calW_p$ and convergence in $\calW_1$. The ``unconditional'' version of this statement can be found, for example, in \cite[Theorem 6.9]{villani09}). We reformulate it under the conditional setup for completeness. 
\begin{lemma}\label{Lemma:WpisW1}
    Under \eqref{eq:uniform_integrability}, if $\E[\calW^q_q(\mm^n_t, \mm^\infty_t)] \nto 0$ holds for $q = 1$, then it holds for any $q \leq p$.
\end{lemma}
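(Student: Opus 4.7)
The plan is to deduce $L^1$ convergence of $\calW_q^q(\mm^n_t,\mm^\infty_t)$ from that of $\calW_1(\mm^n_t,\mm^\infty_t)$ by means of a deterministic interpolation inequality that trades the (weaker) $\calW_1$ distance against tails of the $q$-th moment, and then absorbing those tails using the uniform integrability hypothesis \eqref{eq:uniform_integrability}.

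First I would establish the pointwise inequality
\begin{equation*}
\calW_q^q(\mu,\nu) \leq C_q\Big(R^{q-1}\calW_1(\mu,\nu) + \int_{|x|>R}|x|^q\,d\mu(x) + \int_{|y|>R}|y|^q\,d\nu(y)\Big)
\end{equation*}
for all $R\geq 1$ and $\mu,\nu\in\calP_q(\R^{d_I})$. The idea is to take a $\calW_1$-optimal coupling $\pi$ and split $|x-y|^q$ on $\{|x|\vee|y|\leq R\}$, where $|x-y|^q\leq (2R)^{q-1}|x-y|$, versus its complement, where $|x-y|^q\leq 2^{q-1}(|x|^q+|y|^q)$ and the indicator is dominated by $\ind{|x|>R}+\ind{|y|>R}$. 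The resulting cross terms of the form $\int |x|^q\ind{|y|>R}\,d\pi$ should then be absorbed into the marginal tails using $|x|^q\leq R^q + |x|^q\ind{|x|>R}$ together with $R^q\nu(|y|>R)\leq\int_{|y|>R}|y|^q\,d\nu$.

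Next, using the conditional law representation $\mm^n_t=\calL^{\P^{\hat\alpha^n,m^n}}(X_t\mid X^c_t)$, I would control the expected tail integrals: for $q\leq p$ and $R\geq 1$, on $\{|X_t|>R\}$ one has $|X_t|^q\leq|X_t|^p$, hence
\begin{equation*}
\E\Big[\int_{|x|>R}|x|^q\,d\mm^n_t(x)\Big] = \E^{\P^{\hat\alpha^n,m^n}}\big[|X_t|^q\ind{|X_t|>R}\big] \leq \E^{\P^{\hat\alpha^n,m^n}}\Big[\ind{\norm{X}_\infty>R}\sup_{s\in[0,T]}|X_s|^p\Big],
\end{equation*}
and the right-hand side vanishes as $R\to\infty$ uniformly in $n$ by \eqref{eq:uniform_integrability}; the same bound holds with $n$ replaced by $\infty$. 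Taking expectation in the interpolation inequality then yields
\begin{equation*}
\E\big[\calW_q^q(\mm^n_t,\mm^\infty_t)\big] \leq C_q R^{q-1}\E\big[\calW_1(\mm^n_t,\mm^\infty_t)\big] + C_q\big(\varepsilon_n(R)+\varepsilon_\infty(R)\big),
\end{equation*}
and I would conclude by first choosing $R$ large to make the tail contribution as small as desired uniformly in $n$, and then letting $n\to\infty$ in the first term using the $q=1$ hypothesis.

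The main obstacle is the derivation of the interpolation inequality itself, specifically eliminating the cross terms in favor of pure marginal tails so that \eqref{eq:uniform_integrability} can be applied verbatim; the remaining steps are routine applications of Fubini and a standard $\varepsilon/R$ bookkeeping argument.
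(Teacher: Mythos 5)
Your proposal is correct and follows essentially the same route as the paper: both bound $\calW_q^q$ by a truncated part of order $R^{q-1}\calW_1$ (computed along a $\calW_1$-optimal coupling) plus $q$-th moment tails of the marginals, and then remove the tails using the uniform integrability \eqref{eq:uniform_integrability}. The only cosmetic difference is that the paper splits on the event $\{|x-y|\geq R\}$ via the bound $\ind{|x-y| \geq R} \leq \ind{|x| \geq R/2,\ |x| \geq |x-y|/2} + \ind{|y| \geq R/2,\ |y| \geq |x-y|/2}$, which lands directly on pure marginal tails and avoids the cross terms you absorb by hand.
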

\begin{proof}
    It suffices to show the case for $q = p$.
    Define $c^+ \ce \max(c, 0)$ for $c \in \R$. For $R > 0$, observe that for any $(x, y) \in \R^{d_I} \times \R^{d_I}$:
    $$\ind{|x-y| \geq R} \leq \ind{|x| \geq R/2, |x| \geq |x-y|/2} + \ind{|y| \geq R/2, |y| \geq |x-y|/2}$$
    and therefore,
    \begin{equation}\label{label:indicator_Rp}
        (|x - y|^p - R^p)^+ \leq 2^p|x|^p\ind{|x| \geq R/2} + 2^p|y|^p\ind{|y| \geq R/2}.
    \end{equation}
    To simplify the notation, put $\mu^n_{x^c} =\calL^{\P^{\hat{\alpha}^n, m^n}}\big(X_t | X^c_t = x^c\big)$ for $n \in \N \cup \{\infty\}$. Fixing $X^c_t = x^c$, let $\pi_n(x^c)$ be an optimal transport plan for $\calW_1$ between $\mu^n_{x^c}$ and $\mu^\infty_{x^c}$, whose existence is guaranteed by \cite[Theorem 4.1]{villani09}. 
    By \eqref{label:indicator_Rp} and the definition of $\calW_p$, we have
    \begin{align*}
        \calW_p(\mu^n_{x^c}, \mu^\infty_{x^c}) & \leq \int (|x - y|^p \wedge R^p) + (|x - y|^p - R^p)^+ d\pi^n(x^c)(x, y)\\
        &  \leq \int (|x - y|^p \wedge R^p)d\pi^n(x^c)(x, y)\\
        & \quad + 2^p \E^{\mu^n_{x^c}}[|X_t|^p\ind{|X_t| > R/2}] + 2^p\E^{\mu^\infty_{x^c}}[|X_t|^p\ind{|X_t| > R/2}].
    \end{align*}
    Note that $(|x - y|^p \wedge R^p) \leq R^{p-1}|x-y|$. In particular, 
    \begin{equation*}
        \E_{x^c \sim \L^{\P}(X^c_t)}\bigg[\int (|x - y|^p \wedge R^p)d\pi^n(x^c)(x, y)\bigg] \leq R^{p-1}\E[\calW_1(\mm^n_t, \mm^\infty_t)]\nto 0.
    \end{equation*}
    Therefore, by law of total expectation and uniform integrability \eqref{eq:uniform_integrability}, we have
    \begin{align*}
        \limsup_{n \to \infty} \E[\calW_p(\mm^n_t, \mm^\infty_t)] \leq 2^{p}\lim_{R \to \infty}\limsup_{n \to \infty} \Big(\E^{\P^{\hat{\alpha}^n, m^n}}+\E^{\P^{\hat{\alpha}^\infty, m^\infty}}\Big)\Big[\ind{\norm{X}_\infty > R/2}\sup_{t \in [0, T]}|X_t|^p\Big] = 0.
    \end{align*}
\end{proof}

In light of this lemma, we will show that $\E[\calW_1(\mm^n_t, \mm^\infty_t)] \nto 0$. Given the continuity of $f$ and $b$ in $(a, \mu)$ and the compactness of $A$, Berge's maximum theorem (e.g. \cite[Theorem 17.31]{bookInfDimAnalysis06}) ensures that the minimizer function $\hat{a}$ is continuous in $(\mu, z)$. Consequently, $\hat{\alpha}^{n}$ also converges to $\hat{\alpha}^\infty$ in $\mathbb{H}^2$. Since $|\sigma^{-1}b|$ is bounded, applying bounded convergence theorem yields the convergence of $M^n_T$ to $M^\infty_T$ in $L^1(\Omega)$. Since $|M^n_T|^p$ is integrable uniformly in $n$, by dominated convergence theorem $M^n_T$ also converges to $M^{\infty}_T$ in $L^q(\Omega)$ for any $q \geq 1$. By Kantorovich-Rubinstein duality \eqref{eq:KantorovichDuality}:
 \begin{align*}
\calW_1\big(\mm^n_t, \mm^\infty_t \big) & = \sup_{\phi \in \text{Lip}_1(\R^{d_I})} \E^{\P^{\hat{\alpha}^n, m^n}}\Big[\phi(X_t)|X^c_t\Big] - \E^{\P^{\hat{\alpha}^\infty, m^\infty}}\Big[\phi(X_t)|X^c_t\Big]\\
     & =  \sup_{\phi \in \text{Lip}_1(\R^{d_I})}\frac{\E[M^n_T\phi(X_t)|X^c_t]}{\E[M^n_T|X^c_t]} - \frac{\E[M^\infty_T\phi(X_t)|X^c_t]}{\E[M^\infty_T|X^c_t]}.
 \end{align*}
 Note that $X^c_t$ is $\F^0_t$-measurable, which is independent from the stochastic exponential with respect to $W$. Then for all $k \in \N \cup \{\infty\}$ we have:
 $$\E[M^k_T|X^c_t] = \E[\E[M^k_T|\F_t]|X^c_t] = \E[M^k_t|X^c_t] = \E[M^k_t] = 1.$$
 Therefore, 
\begin{align*}
 \sup_{t \in [0, T]}\E\Big[\calW_1\big(\mm^n_t, \mm^\infty_t \big)\Big] & = \sup_{t \in [0, T]}\E\Big[\sup_{\phi \in \text{Lip}_1(\R^{d_I})} \E\Big[(M^n_T - M^{\infty}_T)\phi(X_t) \Big| X^c_t\Big]\Big]\\
      & \leq \sup_{t \in [0, T]}\E\Big[\E\Big[|M^n_T - M^{\infty}_T||X_t| \Big |X^c_t\Big]\Big]\\
     & \leq C\sup_{t\in [0, T]} \sqrt{\E\Big[|M^n_T - M^\infty_T|^2\Big]\E\Big[|X_t|^2\Big]} \nto 0.
\end{align*}
By Fubini's theorem, Lemma \ref{Lemma:WpisW1} and \eqref{eq:dMJensen}, the mapping $\Phi$ is continuous:
$$d^p_{\calM}(\mm^n, \mm^{\infty}) \leq T^{p/2 - 1} \intT \E\Big[\calW_p^p(\mm^n_t, \mm^{\infty}_t)\Big]dt \nto 0.$$

Finally, applying \cite[Corollary 17.56]{bookInfDimAnalysis06} yields a fixed point $\hat{m} \in \calK$ of the mapping $\Phi$. From Step 2, we know $\Phi(\calK) \subset \M_{C, 1}$, so the fixed point $\hat{m}$ belongs to $\M_{C,1}$, and the consistency condition is satisfied. This completes the proof of Theorem \ref{Theorem:MainWeak}.\qed
 ~\\

 Now we return to the strong formulation and Theorem \ref{Theorem:Main}. Let $(\hat{m}, \hat{\alpha}^w)$ be a MFG equilibrium under the weak formulation from Theorem \ref{Theorem:MainWeak}, and let $\hat{\alpha}_M$ be the corresponding measurable function from Lemma \ref{Lemma:Mimicking}. Then by the proof of Theorem \ref{Theorem:MainWeak}, we know $\hat{\alpha}^w_{\cdot} = \hat{\alpha}_M(\cdot, X_{\cdot}, X^c_{\cdot})$, $\P\otimes dt$ almost everywhere. Obtain $X^{\hat{\alpha}_M}$ and the corresponding Markovian control $\hat{\alpha}_{\cdot} \ce \hat{\alpha}_M(\cdot, X^{\hat{\alpha}_M}_{\cdot}, X^c_{\cdot}) \in \A$ by solving \eqref{eq:MarkovianSDE_M'}. Admissibility of $\hat{\alpha}_M$ is guaranteed by Proposition \ref{proposition:MalliavinDiffBounded}. We conclude the proof of Theorem \ref{Theorem:Main} with the following corollary. 

 \begin{corollary}
     $(\hat{m}, \hat{\alpha}) \in \M_{C, 1} \times \A$ is a MFG equilibrium under the strong formulation.
 \end{corollary}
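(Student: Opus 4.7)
The corollary requires two things: consistency $\hat m_t = \calL^{\P}(X^{\hat\alpha}_t|X^c_t)$ and optimality of $\hat\alpha$ over $\A$. For consistency, I would invoke Proposition \ref{proposition:MalliavinDiffBounded}: the strong SDE satisfied by $(X^{\hat\alpha}, X^c)$ on $(\Omega, \P)$ has the same bounded measurable Markovian drift $b^{\hat m, \hat\alpha}$ of \eqref{eq:newdrift} as the pair $(X, X^c)$ has under $\P^{\hat\alpha^w, \hat m}$ after Girsanov. Uniqueness in law then equates the two joint path laws, and conditioning on the $X^c$-coordinate transfers the weak consistency $\hat m_t = \calL^{\P^{\hat\alpha^w, \hat m}}(X_t|X^c_t)$ to $\hat m_t = \calL^{\P}(X^{\hat\alpha}_t|X^c_t)$. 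Plugging the Markovian controls $\hat\alpha_t = \hat\alpha_M(t, X^{\hat\alpha}_t, X^c_t)$ and $\hat\alpha^w_t = \hat\alpha_M(t, X_t, X^c_t)$ into this same distributional identity yields the value identity $J^{\hat m}(\hat\alpha) = J^{\hat m}_w(\hat\alpha^w) = \E[Y^{\hat m}_0]$, where $Y^{\hat m}$ is the solution to the minimized Hamiltonian BSDE \eqref{eq:optim.BSDE}.

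For optimality, fix $\beta \in \A$ and let $\bar Y^\beta$ solve the linear BSDE with driver $f(s, X^\beta_s, \hat m_s, \beta_s)$ and terminal $g(X^\beta_T, \hat m_T)$ on $(\Omega, \P)$, so that $\E[\bar Y^\beta_0] = J^{\hat m}(\beta)$. I would then change measure using the bounded Girsanov density $\mathcal{E}\pa{-\int_0^\cdot \sigma^{-1}b(s, X^\beta_s, \hat m_s, \beta_s)\,dW_s}_T$, producing $\hat\P^\beta$ under which $\tilde W_t \ce W_t + \int_0^t \sigma^{-1}b(s, X^\beta_s, \hat m_s, \beta_s)\,ds$ is a Brownian motion and $X^\beta = \xi + \sigma\tilde W + \sigma^0 W^0$ is driftless; the BSDE then rewrites with driver $H(\cdot, X^\beta, \hat m, \beta, \cdot)$. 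Solving the auxiliary BSDE with driver $h(\cdot, X^\beta, \hat m, \cdot)$ and terminal $g(X^\beta_T, \hat m_T)$ on $(\Omega, \hat\P^\beta)$ produces $\tilde Y$, and the comparison principle for Lipschitz BSDEs \cite[Theorem 2.2]{el1997backward} together with $h \le H$ gives $\tilde Y_0 \le \bar Y^\beta_0$ almost surely (under either measure, as the two agree on $\F_0$). Since $W^0$ and the $\mathbb{F}^0$-adapted process $\hat m$ are unaffected by the Girsanov transform, the joint law of $(X^\beta, \tilde W, W^0, \hat m, \xi, \xi^c)$ under $\hat\P^\beta$ equals that of $(X, W, W^0, \hat m, \xi, \xi^c)$ under $\P$, so $\tilde Y_0$ is equidistributed with $Y^{\hat m}_0$ and $\E[\tilde Y_0] = \E[Y^{\hat m}_0]$. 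Chaining yields $J^{\hat m}(\beta) = \E[\bar Y^\beta_0] \ge \E[\tilde Y_0] = \E[Y^{\hat m}_0] = J^{\hat m}(\hat\alpha)$.

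The main obstacle is this optimality step. For non-Markovian $\beta$, the naive identity $J^{\hat m}(\beta) = J^{\hat m}_w(\beta)$ is false in general, because the process $\beta$ is a specific functional of $(\xi, W, W^0)$ whose joint behavior with the controlled state genuinely differs between the strong formulation and the Girsanov-reweighted weak formulation. The point of the change of measure to $\hat\P^\beta$ is to turn the strong value $J^{\hat m}(\beta)$ into an expectation of a BSDE on a probability space where $X^\beta$ is driftless — precisely the arena in which the BSDE comparison $h \le H$ delivers the sharp lower bound $\E[Y^{\hat m}_0]$ and thus establishes strong optimality of $\hat\alpha$.
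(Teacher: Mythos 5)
Your proposal is correct, and its first half (consistency via uniqueness in law for the Markovian SDE, and the identity $J^{\hat m}(\hat\alpha)=J^{\hat m}_w(\hat\alpha^w)=\E[Y^{\hat m}_0]$ via \eqref{label:SameJointLaw}) coincides with the paper's argument. The optimality half, however, takes a genuinely different route. The paper handles an arbitrary $\alpha\in\A$ by running the mimicking/measurable-selection machinery of Lemma \ref{Lemma:Mimicking} a second time: it produces a Markovian $\alpha_M$ with $\P\circ(X^{\alpha}_t,X^c_t)^{-1}=\P^{\widetilde\alpha,\hat m}\circ(X_t,X^c_t)^{-1}$ together with the pointwise projection inequality for $f$, and concludes $J^{\hat m}(\alpha)\ge J^{\hat m}_w(\widetilde\alpha)\ge J^{\hat m}_w(\hat\alpha^w)$; this reuses Assumption \ref{Assumption:M} and the Brunick--Shreve theorem. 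You instead embed each strong control into the weak formulation by the inverse Girsanov transform: under $\hat\P^\beta$ the state $X^\beta$ becomes driftless and the BSDE comparison against the minimized-Hamiltonian equation \eqref{eq:optim.BSDE} yields $J^{\hat m}(\beta)\ge\E[Y^{\hat m}_0]$ directly. This is the classical ``weak value lower-bounds strong value'' verification argument; it buys you independence from the convexity Assumption \ref{Assumption:M} and from the mimicking theorem at this stage, at the cost of two facts you should state explicitly: (i) the predictable representation property of $(\tilde W,W^0)$ under $\hat\P^\beta$ with respect to $\mathbb F$, needed to solve the auxiliary BSDE on the transformed space, and (ii) that the solution of a Lipschitz BSDE is a measurable functional of its data, so that equality of the laws of $(\xi,\xi^c,\tilde W,W^0,X^\beta,\hat m)$ under $\hat\P^\beta$ and of $(\xi,\xi^c,W,W^0,X,\hat m)$ under $\P$ really does force $\tilde Y_0\eqd Y^{\hat m}_0$ (your ``so'' hides this step). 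Both are routine here because the Girsanov density is bounded in every $L^q$ and the drivers are Lipschitz in $z$, so the argument goes through; the paper itself relies on the analogous facts implicitly in Step 1 of the proof of Theorem \ref{Theorem:MainWeak}.
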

 \begin{proof}
     Consistency of $\hat{m}$ still holds since for $t \in [0, T]$: 
     $$\hat{m}_t = \calL^{\P^{\hat{\alpha}^w, \hat{m}}}\big(X_t \mid X^c_t\big) = \calL^{\P}\big(X^{\hat{\alpha}_M}_t \mid X^c_t\big), \quad \P\as$$ 
     For optimality of $\hat{\alpha}$, recall $\varphi_{\hat{m}}$ from Lemma \ref{lemma:calK}. By \eqref{label:SameJointLaw} we have
     \begin{align*}
         &J^{\hat{m}}_w(\hat{\alpha}^w)  = \E^{\P^{\hat{\alpha}, \hat{m}}}\sqbra{g(X_T, \varphi_{\hat{m}}(T, X^c_T)) + \intT f(t, X_t, \varphi_{\hat{m}}(t, X^c_t), \hat\alpha_M(t, X_t, X^c_t))dt}\\
         & = \E\sqbra{g(X^{\hat{\alpha}_M}_T,  \varphi_{\hat{m}}(T, X^c_T)) + \intT f(t, X^{\hat{\alpha}_M}_t,  \varphi_{\hat{m}}(t, X^c_t), \hat\alpha_M(t, X^{\hat{\alpha}_M}_t, X^c_t))dt} =  J^{\hat{m}}(\hat{\alpha}).
     \end{align*}
    
     For any other admissible strategy $\alpha \in \A$ such that \eqref{eq:Strong_State} has a unique strong solution $X^{\alpha}$, the proof of Lemma \ref{Lemma:Mimicking} implies the existence of a measurable function $\alpha_M:[0, T] \times \R^{d_I} \times \R^{d_C}\to A$ such that $\P\circ (X^{\alpha}_t, X^c_t)^{-1} = \P\circ (X^{\alpha_M}_t, X^c_t)^{-1} =  \P^{\widetilde{\alpha}, \hat{m}} \circ (X_t, X^c_t)^{-1}$ for all $t \in [0, T]$, where  $\widetilde{\alpha}_{\cdot} \ce \alpha_M(\cdot, X_{\cdot}, X^c_{\cdot})$. Moreover 
     $$f(t, X^{\alpha_M}_t, \hat{m}_t, \alpha_M(t, X^{\alpha_M}_t, X^c_t)) \leq \E\sqbra{f(t, X^{\alpha}_t, \hat{m}_t, \alpha_t) \mid X^{\alpha}_t, X^c_t}, \text{ for } t \in [0, T].$$
     By optimality of $\hat{\alpha}^w$ in the weak formulation, we have 
     \begin{align*}
         J^{\hat{m}}(\alpha) & = \E\sqbra{g(X^{\alpha}_T, \hat{m}_T) + \intT f(t, X^{\alpha}_t,  \hat{m}_t, \alpha_t )dt}\\
         & \geq \E\sqbra{g(X^{\alpha_M}_T,  \varphi_{\hat{m}}(T, X^c_T)) + \intT f(t, X^{\alpha_M}_t,  \varphi_{\hat{m}}(t, X^c_t), \alpha_M(t, X^{\alpha_M}_t, X^c_t))dt}\\
         & =\E^{\P^{\widetilde{\alpha}, \hat{m}}}\sqbra{g(X_T,  \varphi_{\hat{m}}(T, X^c_T)) + \intT f(t, X_t,  \varphi_{\hat{m}}(t, X^c_t), \alpha_M(t, X_t, X^c_t))dt}\\
         & = J_w^{\hat{m}}(\widetilde{\alpha}) \geq J^{\hat{m}}_w(\hat{\alpha}^w) = J^{\hat{m}}(\hat\alpha).
     \end{align*}
 Thus, $(\hat{m}, \hat{\alpha})$ is a MFG equilibrium for the strong formulation.
 \end{proof}

\begin{corollary}
\label{cor:n.steps.mfg}
    Fix a partition $(0 = t_0\leq t_1 \leq \dots \leq t_n = T)$ of $[0,T]$ and call mean field equilibrium a pair $(\hat m,\hat\alpha) \in \calM \times \A$ such that $\hat\alpha \in \A$ minimizes the objective $J^{\hat{m}}$ defined in \eqref{eq:Strong_Objective} over $\A$ and for every $t \in [0, T]$, $\hat m_t$ satisfies
 \begin{equation*}
     \hat m_t = \calL^{\P}\big(X^{\hat\alpha}_t | X^c_{t \wedge t_0}, \cdots, X^c_{t \wedge t_n}\big), \quad \P\as \text{ for all } t \in [0, T].
 \end{equation*} 
    If Assumptions \ref{Assumption:E} and \ref{Assumption:M} hold, then a mean field equilibrium exists. 
\end{corollary}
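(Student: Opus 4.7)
The strategy is to reduce to Theorem \ref{thm:main} by augmenting the common state to remember the values of $X^c$ at each partition point. Specifically, I define the $\R^{(n+1)d_C}$-valued process
$$Y_t \ce (X^c_{t \wedge t_0}, X^c_{t \wedge t_1}, \ldots, X^c_{t \wedge t_n}), \quad t \in [0, T],$$
which is $\mathbb{F}^0$-progressively measurable. The equilibrium condition of the corollary then reads $\hat m_t = \calL^{\P}(X^{\hat\alpha}_t \mid Y_t)$, matching Definition \ref{Def:Strong_MFGsolution} with $Y_t$ playing the role of $X^c_t$.

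I would then replay the proof of Theorem \ref{thm:main} with this augmented common state. Most arguments transfer essentially verbatim. The analog of Lemma \ref{lemma:calK} holds because any $m$ in the candidate set is a measurable function of $(t, Y_t)$ via disintegration, giving a representation $m_t = \varphi_m(t, Y_t)$. The optimality analysis via the BSDE \eqref{eq:optim.BSDE} depends only on the measurability of $m$ and not on the specific form of its conditioning. The mimicking argument (Lemma \ref{Lemma:Mimicking}) applies in the augmented setting: the drift and cost are Markovian in $(X_t, Y_t)$ through $\varphi_m$, so $\hat{\alpha}_M$ can be chosen as a measurable function of $(t, X_t, Y_t)$. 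The continuity of the best-response map $\Phi$ and the application of Brouwer's fixed-point theorem transfer with only notational changes.

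The main technical obstacle is extending Proposition \ref{proposition:MalliavinDiffBounded}, which is used to produce the strong Markovian state on the original probability space and which in its stated form requires invertibility of the common state's diffusion. In the present setting, the Markovian SDE for $X^{\hat\alpha_M}$ has drift depending on $(X^{\hat\alpha_M}_t, Y_t)$, and $Y_t$ has degenerate diffusion since its $i$-th coordinate freezes after $t_i$. I would handle this sub-interval by sub-interval: on each $[t_k, t_{k+1}]$, conditional on $\F^0_{t_k}$, the frozen components of $Y$ are almost-surely constant and act as random parameters, while the only evolving component is $X^c_t$ with invertible diffusion $\sigma^c$. This reduces the conditional SDE on $[t_k, t_{k+1}]$ to the framework of Proposition \ref{proposition:MalliavinDiffBounded}, yielding existence, uniqueness, Malliavin differentiability, and uniform bounds in $\S_{C,\delta}$ parametrized measurably by the frozen values. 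Concatenating across the $n$ sub-intervals and using that the bounds depend only on $\|b\|_\infty, \|b^c\|_\infty, n$, and the dimensions gives the global strong solution and the compactness analog of Corollary \ref{Cor:MalliavinCompact}, after which the remainder of the proof of Theorem \ref{thm:main} applies unchanged to yield a mean field equilibrium in the sense of the corollary.
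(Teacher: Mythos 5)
Your proposal is correct and follows essentially the same route as the paper: augment the common state to the vector of stopped processes $(X^c_{t\wedge t_0},\dots,X^c_{t\wedge t_n})$, observe that only Proposition \ref{proposition:MalliavinDiffBounded} needs extending because the augmented diffusion is degenerate/time-dependent, and handle that by pasting the estimates sub-interval by sub-interval (with the constant now depending on $n$), after which the fixed-point argument of Theorem \ref{thm:main} transfers verbatim. Your framing of the frozen coordinates as random parameters conditional on $\F^0_{t_k}$ is just a rephrasing of the paper's observation that the piecewise-constant diffusion lets one reapply the proposition from the Malliavin-differentiable initial condition at each $t_k$.
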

\begin{proof}
    To simplify the discussion assume $n=2$.
    Put $\mathbb{X}^c_t := (X^1_t,X^2_t)$ with
\begin{align*}
    X^i_t &= X^c_{t\wedge t_i} = \xi^c + \int_0^tb^c(s,X^i_s)1_{\{s\le t_i\}}ds + \int_0^t\sigma^c1_{\{s\le t_i\}}dW_s^0.
\end{align*}
Then we have
$\hat m_t = \mathcal{L}^{\mathbb{P}}(X^{\hat{\alpha}}_t|\mathbb{X}_t^c)$ with $\mathbb{X}^c$ satisfying $d\mathbb{X}^c_t = B^c(t,\mathbb{X}^c_t)dt + \Sigma_td\mathbb{W}^c_t$, $\mathbb{X}_0 = (\xi^c,\xi^c)$, $\mathbb{W} = (W^0,W^0)$
with appropriately defined vector $B^c$ and diagonal matrix $\Sigma$. 
We would be exactly in the setting of Theorem \ref{thm:main} (with newe common state $\mathbb{X}^c$) if not for the fact that the diffusion coefficient is now time-dependent.
Despite, noticing that it is piece-wise constant allows us to repeat the proof of Theorem \ref{thm:main}, especially the crucial Proposition \ref{proposition:MalliavinDiffBounded} which is the only step where the constant volatility was used.

In fact, for any uniformly bounded, Borel-measurable functions $\tilde{b}: [0, T] \times \R^{d_I} \times (\R^{d_C})^2 \to  \R^{d_I}$, the following SDE has a unique strong solution in $\bL_{1,2}^a(\R^{d_I} \times (\R^{d_C})^2, \mathbb{F})$:
     \begin{equation}\label{label:systemSDE.ndim}
         \begin{cases}
         d\widetilde{X}_t = \tilde{b}(t, \widetilde{X}_t, \mathbb{X}^c_t)dt + \sigma dW_t + \sigma^0 dW^0_t, \quad \widetilde{X}_0 = \xi\\
         d\mathbb{X}^c_t= B^c(t, \mathbb{X}^c_t)dt + \Sigma_t d\mathbb{W}^0_t, \quad \mathbb{X}^c_0 = (\xi^c,\xi^c).
     \end{cases}
     \end{equation}
     This is because for the time horizon $t = t_1$, the result follows by Proposition \ref{proposition:MalliavinDiffBounded} and on the time interval $[t_1,T]$, given that the new initial condition $\mathbb{X}^c_{t_1}$ is Malliavin differentiable, we can apply again Proposition \ref{proposition:MalliavinDiffBounded} and pasting both solutions on smaller time intervals gives the unique solution of the SDE on $[0,T]$.
     To show that the set $\X$ of all pairs $(\widetilde{X}, \mathbb{X}^c)$ that (uniquely) solve the system of SDEs \eqref{label:systemSDE.ndim} for some measurable, bounded $\tilde{b}$ with bound $\lVert\tilde{b}\rVert_\infty \leq \norm{b}_\infty$ is compact, as in the proof of Proposition \ref{proposition:MalliavinDiffBounded} it suffices to find constant $C >0$ such that for $M:=(X,\mathbb{X}^c)$, we have
     $\E\sqbra{|M_t|^2}\le C$ for all $t \in [0, T]$ and
     \begin{equation*}
     \E[\norm{D_sM_t - D_{s'}M_t}^2] \leq C|s - s'|, \text{ and } \sup_{0 \leq u \leq t} \E[\norm{D_uM_t}^2]\leq C.
     \end{equation*}    
     for all $0 \leq s, s' \leq t \leq T$.
     For $t\le t_1$ this property directly follows from Proposition \ref{proposition:MalliavinDiffBounded} because $\mathbb{X}^c_t = (X^c_t, X^c_t)$.
     For $t\ge t_1$, if $t_1 \leq s, s' \leq T$, we have $\mathbb{X}^c_t = (X^c_{t_1}, X^c_{t})$, so the property follows again. If $s \leq t_1 \leq s' \leq t$, triangle inequality leads to the same criterion. 
     Notice however that when $n$ is arbitrary, the constant $C$ will depend on $n$.
     With this key compactness property at hand, the rest of the proof of existence of a mean field equilibrium is the same, replacing $X^c$ by $\mathbb {X}^c$ and $d_c$ by $d_c\times n$.
\end{proof}

\begin{remark}\label{remark:generalMFG}
We contrast our consistency condition in Definition \ref{Def:Strong_MFGsolution} with standard formulation of MFGs with common noise, given by:
     $\hat m_t = \calL^{\P}\big(X^{\hat\alpha}_t | \F^0_t\big)$,  $\P\as \text{ for all } t \in [0, T]$.
 Our method does not directly extend to this setting because it would require simultaneously mimicking the new process $m$, which alters the process and compromises the optimality of the control.
In addition, this will require existence and compactness estimates (in the form of Proposition \ref{proposition:MalliavinDiffBounded}) for non-Markovian SDEs.
 However, as observed in Corollary \ref{cor:n.steps.mfg}, 
 the consistency condition can be enforced at finitely many time points. 
Coming back to Remark \ref{rem:def.equil}, this shows that our setting is consistent with general finite particle models in which players ``react'' to the common noise at finitely many time points, or where the common noise is a discrete process.

It is tempting to take $n\to\infty$ to recover the full past-dependent case. 
The resulting equilibrium would be weak, which is due to the dependence of the constant $C$ in Proposition \ref{proposition:MalliavinDiffBounded} on the dimensions, which would explode as $n \to \infty$. Nonetheless, this method still partially generalizes the approach in \cite{CarmonaCommonNoise}, as we discretize the common noise in time only, but not in space.
\end{remark}

%%%%%%%%%%%%%%%%%%%%%%%%%%%%%%%%%%%%%%%%%%%%%%%%%%%%%%%%%%%%%%%%%%%
%%                                                               %%
%% Use the two commands below for producing your bibliography    %%
%% with bibtex, then comment again the commands and include the  %%
%% content of the .bbl file in this file below the commands.     %%
%%                                                               %%
%%%%%%%%%%%%%%%%%%%%%%%%%%%%%%%%%%%%%%%%%%%%%%%%%%%%%%%%%%%%%%%%%%%
%\bibliographystyle{amsplain}
%\bibliography{references}

\begin{thebibliography}{10}

\bibitem{ahuja2016}
Saran Ahuja, \emph{Wellposedness of mean field games with common noise under a weak monotonicity condition}, SIAM Journal on Control and Optimization \textbf{\textbf{54}} (2016), no.~1, 30--48.

\bibitem{bookInfDimAnalysis06}
Charalambos~D. Aliprantis and Kim~C. Border, \emph{Infinite dimensional analysis: a hitchhiker's guide}, 3rd ed., Springer, 2006.

\bibitem{Barrasso-Touzi22}
Adrien Barrasso and Nizar Touzi, \emph{Controlled diffusion mean field games with common noise, and {McKean--Vlasov} second order backward {SDEs}}, Theory of Probability and Its Applications \textbf{\textbf{66}} (2022), no.~4, 613--639.

\bibitem{brunick2013mimicking}
Gerard Brunick and Steven Shreve, \emph{Mimicking an {I}t{\^o} process by a solution of a stochastic differential equation}, The Annals of Applied Probability \textbf{\textbf{23}} (2013), no.~4, 1584--1628.

\bibitem{CardaliaguetDelarueLasryLions19}
Pierre Cardaliaguet, Fran{\c c}ois Delarue, Jean-Michel Lasry, and Pierre-Louis Lions, \emph{The master equation and the convergence problem in mean field games: (ams-201)}, Princeton University Press, 2019.

\bibitem{CardaliaguetMFG18}
Pierre Cardaliaguet and Charles-Albert Lehalle, \emph{Mean field game of controls and an application to trade crowding}, Mathematics and Financial Economics \textbf{\textbf{12}} (2018), 335--363.

\bibitem{CarmonaCommonNoise}
Ren{\'e} Carmona, Fran{\c c}ois Delarue, and Daniel Lacker, \emph{Mean field games with common noise}, The Annals of Probability \textbf{\textbf{44}} (2016), no.~6, 3740--3803.

\bibitem{CarmonaLacker15}
Ren{\'e} Carmona and Daniel Lacker, \emph{A probabilistic weak formulation of mean field games and applications}, The Annals of Applied Probability \textbf{\textbf{25}} (2015), no.~3, 1189--1231.

\bibitem{Malliavin1992compact}
Giuseppe Da~Prato, Paul Malliavin, and David Nualart, \emph{Compact families of {W}iener functionals}, Comptes rendus de l'Acad{\'e}mie des sciences. S{\'e}rie 1, Math{\'e}matique \textbf{\textbf{315}} (1992), no.~12, 1287--1291.

\bibitem{el1997backward}
Nicole El~Karoui, Shige Peng, and Marie-Claire Quenez, \emph{Backward stochastic differential equations in finance}, Mathematical Finance \textbf{\textbf{7}} (1997), no.~1, 1--71.

\bibitem{haussmann2006existence}
Ulrich~G. Haussmann, \emph{Existence of optimal {M}arkovian controls for degenerate diffusions}, Stochastic Differential Systems: Proceedings of the 3rd Bad Honnef Conference June 3--7, 1985, Springer, 2006, pp.~171--186.

\bibitem{haussmann1990existence}
Ulrich~G. Haussmann and Jean-Pierre Lepeltier, \emph{On the existence of optimal controls}, SIAM Journal on Control and Optimization \textbf{\textbf{28}} (1990), no.~4, 851--902.

\bibitem{DisplMonotone23}
Joe Jackson and Ludovic Tangpi, \emph{Quantitative convergence for displacement monotone mean field games with controlled volatility}, Mathematics of Operations Research (2023).

\bibitem{lacker2015mean}
Daniel Lacker, \emph{Mean field games via controlled martingale problems: existence of {M}arkovian equilibria}, Stochastic Processes and their Applications \textbf{\textbf{125}} (2015), no.~7, 2856--2894.

\bibitem{LasryLions1}
{Jean--Michel} Lasry and {Pierre-Louis} Lions, \emph{Jeux {\`a} champ moyen. \uppercase{I} -- le cas stationnaire}, Comptes Rendus Mathematique \textbf{\textbf{343}} (2006), no.~9, 619--625.

\bibitem{Menoukeu-Meyer-Nilssen13}
Olivier {Menoukeu-Pamen}, Thilo Meyer-Brandis, Torsten Nilssen, Frank Proske, and T.~Zhang, \emph{A variational approach to the construction and {M}alliavin differentiability of strong solutions of {SDE}s}, Mathematische Annalen \textbf{\textbf{357}} (2013), no.~2, 761--799.

\bibitem{Mou-Zhang22}
Chenchen Mou and Jianfeng Zhang, \emph{Mean field game master equations with anti-monotonicity conditions}, Journal of the European Mathematical Society (2024), to appear.

\bibitem{nualart2006malliavin}
David Nualart, \emph{The {M}alliavin calculus and related topics}, 2nd ed., Probability and its applications, Springer--Verlag Berlin Heidelberg, 2006.

\bibitem{TangpiWang23}
Ludovic Tangpi and Shichun Wang, \emph{Optimal bubble riding with price-dependent entry: a mean field game of controls with common noise}, Mathematics and Financial Economics \textbf{\text{18}} (2024), 275--312.

\bibitem{villani09}
C{\'e}dric Villani, \emph{Optimal transport}, Springer Berlin, Heidelberg, 2009.

\end{thebibliography}

\end{document}